\documentclass[a4paper, twoside,12pt]{article}
\usepackage{fancyhdr}
\usepackage{fnpos}
 \usepackage[english]{babel}        
\usepackage[T1]{fontenc}          
\usepackage{graphicx}             
\usepackage{makeidx}
\usepackage{fancybox}
\usepackage{framed}
\usepackage{fancyhdr}
 \usepackage{pstricks,pst-plot,pstricks-add}
\usepackage[margin=1in]{geometry}
\usepackage{graphicx}
\usepackage{titlesec}
\usepackage{amsmath}
\usepackage{amsfonts}   
\usepackage{amssymb}    
\usepackage{amsthm}
\usepackage{dsfont}
\usepackage{mathtools}
\usepackage{verbatim}   
\usepackage{color}
\usepackage{listings}
\usepackage{graphicx}
\usepackage{amsmath}
\usepackage{amsmath,amssymb,color,inputenc,euscript,graphicx,psfrag}


\setlength{\topmargin}{-0.3in}
\setlength{\topskip}{0.3in}
\setlength{\textheight}{9.5in}
\setlength{\textwidth}{6in}
\setlength{\oddsidemargin}{0.1in}
\setlength{\evensidemargin}{0.1in}


\newtheorem{thm}{Theorem}[section]
\newtheorem{cor}[thm]{Corollary}
\newtheorem{lem}[thm]{Lemma}

\newtheorem{rem}[thm]{Remark}
\numberwithin{equation}{section}

\renewcommand{\thefootnote}

\newcommand\co{\operatorname{co}}

 \pagestyle{fancy} \pagestyle{myheadings}{\markboth{\sl B.  Amri  and M. Gaidi }{\sl Oscillating Dunkl multiplier }



\author {B\'echir Amri and  Mohamed Gaidi}

\title{  $L^p$ estimates    for an  oscillating Dunkl multiplier  }

\date{}
 \begin{document}
 \maketitle
\begin{center}
   Universit\'{e} Tunis El Manar, Facult\'{e} des sciences de Tunis,\\ Laboratoire d'Analyse Math\'{e}matique
       et Applications,\\ LR11ES11, 2092 El Manar I, Tunisie.\\
     \textbf{ e-mail:} bechir.amri@ipeit.rnu.tn,  gaidi.math@gmail.com
\end{center}
  \begin{abstract}
 In this paper, we study the $L^p$ boundedness of a class
   of oscillating multiplier operator for the Dunkl transform, $T_{m_\alpha}=\mathcal{F}_k^{-1}(m_{\alpha}\mathcal{F}_k(f))$ with
   $m(\xi)=|\xi|^{-\alpha}e^{\pm i|\xi|}\phi( \xi)$.  We obtain an $L^p$-bound result
    for  the corresponding maximal functions. As a specific applications, we give  an extension   of the $L^p$ estimate
  for the wave equation and  of   Stein's theorem for the analytic family of maximal spherical means \cite{Stein}.\\ \\
 \textbf{ Keywords}. Dunkl transform, Multiplier operators, Wave equation. \\
\textbf{Mathematics Subject Classification}. Primary 42A38; 42A45. Secondary 35L05.
 \end{abstract}
  \section{ Introduction and  Statement of Results}
  Let $\alpha > 0$ and $\beta> 0$, the
  oscillating  multiplier $T_{\alpha,\beta}$
is defined via Fourier transform by $\widehat{T_{\alpha,\beta}(f)}=m _{\alpha,\beta}\widehat{f}$, where $ m _{\alpha,\beta}=|\xi|^{-\alpha}e^{i|\xi|^{\beta}}\phi(\xi) $
 and $\phi$ is a $C^\infty$   function on $\mathbb{R}^n$ which vanishes near the origin
and is equal to  $1$ for all sufficiently large $\xi$.
 The study of their $L^p$ properties  going back to the works of  I. Hirschman \cite{H}  in the case $n=1$   and   S. Wainger \cite{Wa} in higher dimensions. Later on,  they have been extensively studied again by many authors   in several different contexts, see \cite{Feff, MIY, Per,Sj}.
  This paper is devoted to the study of $L^p$ boundedness of oscillating  multiplier in the context of Dunkl analysis.
 We will focus   on   the case  $\beta=1$,  because of the close connection to  the wave equation associated with the Dunkl Laplacian $\Delta_k$,
 and to the  spherical maximal function. The latter is already studied by  L. Deleaval \cite{DE}.
 In order to describe more precisely the results studied in this paper, we shall start by giving a brief summary of the Dunkl analysis.
\subsection{Background}
 Dunkl theory generalizes classical Fourier analysis on \,$\mathbb{R}^n$.
It started twenty years ago with Dunkl's seminal work \cite{D1}
and was further developed by several mathematicians.
We refer for more details to the articles \cite{D1, J1,R4} and the references cited therein.

\par Let $G\!\subset\!\text{O}(\mathbb{R}^n)$
be a finite reflection group associated to a reduced root system $R$
and $k:R\rightarrow[0,+\infty)$  a $G$--invariant function
(called multiplicity function).
Let $R^+$ be a positive root subsystem. The Dunkl operators  $D_\xi^k$ on $\mathbb{R}^n$ are
the following $k$--de\-for\-ma\-tions of directional derivatives $\partial_\xi$
by difference operators\,:
$$
D_\xi^k f(x)=\partial_\xi f(x)
+\sum_{\,\upsilon\in R^+}\!k(\upsilon)\,\langle\upsilon,\xi\rangle\,
\frac{f(x)-f(\sigma_\upsilon.\,x)}{\langle\upsilon,\,x\rangle}\,,
$$
where
\,$\sigma_\upsilon.\,x=
x-\frac{\langle\upsilon,\,x\rangle}{2\,|\alpha|^2}\,\upsilon$
\,denotes the reflection
with respect to the hyperplane orthogonal to $ \upsilon$. Here $\langle .,\, .\rangle$ is the usual Euclidean
inner product and $| \,.\,  |$ its induced norm.
The Dunkl operators are antisymmetric
with respect to the measure $w_k(x)\,dx$
with density
$$
w_k(x)=\,\prod_{\,\upsilon\in R^+}|\,\langle\upsilon,x\rangle\,|^{\,2\,k(\upsilon)}\,.
$$
The operators $\partial_\xi$ and $D_\xi^k$
are intertwined by a Laplace--type operator
\begin{eqnarray*}\label{vk}
V_k\hspace{-.25mm}f(x)\,
=\int_{\mathbb{R}^n}\hspace{-1mm}f(y)\,d\mu_x(y)
\end{eqnarray*}
associated to a family of compactly supported probability measures
\,$\{\,\mu_x\,|\,x\!\in\!\mathbb{R}^n\hspace{.25mm}\}$\,.
Specifically, \,$\mu_x$ is supported in the the convex hull $\co(G.x)\,.$
\par For every $y\!\in\!\mathbb{C}^n$\!,
the simultaneous eigenfunction problem
\begin{equation*}
D_\xi^k f=\langle y,\xi\rangle\,f
\qquad\forall\;\xi\!\in\!\mathbb{R}^n
\end{equation*}
has a unique solution $f(x)\!=\!E_k(x,y)$
such that $E_k(0,y)\!=\!1$, called the Dunkl kernel and is given by
\begin{equation*}\label{EV}
E_k(x,y)\,
=\,V_k(e^{\,\langle.,\,y\,\rangle})(x)\,
=\int_{\mathbb{R}^n}\hspace{-1mm}e^{\,\langle z,y\rangle}\,d\mu_x(z)
\qquad\forall\;x\!\in\!\mathbb{R}^n.
\end{equation*}
Furthermore this kernel has a holomorphic extension to $\mathbb{C}^n\times \mathbb{C}^n $
 and the following estimate hold\,: for  $ \;x, \;y\!\in\!\mathbb{C}^n,$
\begin{itemize}
\item[(ii)] $E_k(x,y)=E_k(y,x)$,
\item[(iii)] $E_k(\lambda x,y)=E_k(x,\lambda  y)$,\;  for $\lambda\in \mathbb{C}$,
\item[(iv)] $E_k(g. x,g.y)=E_k(x, y)$, for $g\in G$.
\end{itemize}
 In dimension $n\!=\!1$,
these functions can be expressed in terms of Bessel functions.
Specifically,
$$
E_k(x,y)= \mathcal{J}_{k-\frac12}(ixy)
+\frac{xy}{2\hspace{.25mm}k+1}\, \mathcal{J}_{k+\frac12}(ixy)\,,
$$
where
$$
 \mathcal{J}_\nu(z)\,=\;\Gamma(\nu\!+\!1)\;
 \sum_{\,n=0}^{+\infty}\;
\frac{(-1)^n}{n\hspace{.25mm}!\,\Gamma(\nu+n+1)}\;
 \left(\frac{z}{2} \right)^{2n}
$$
are normalized Bessel functions.
\par The Dunkl transform  is defined on $L^1(\mathbb{R}^n\!,w_k(x)dx)$ by
$$
\mathcal{F}_kf(\xi)={\textstyle c_k}
\int_{\mathbb{R}^n}\!f(x)\,E_k(x,-i\,\xi)\,w_k(x)\,dx\,,
$$
where
$$
c_k\,=\int_{\mathbb{R}^n}\!e^{-\frac{|x|^2}2}\,w(x)\,dx\,.
$$
We list some known properties of this transform\,:
\begin{itemize}
\item[(i)]
The Dunkl transform is a topological automorphism
of the Schwartz space $\mathcal{S}(\mathbb{R}^n)$.
\item[(ii)]
(\textit{Plancherel Theorem\/})
The Dunkl transform extends to
an isometric automorphism of $L^2(\mathbb{R}^n\!,w_k(x)dx)$.
\item[(iii)]
(\textit{Inversion formula\/})
For every $f\!\in\!\mathcal{S}(\mathbb{R}^n)$,
and more generally for every $f\!\in\!L^1(\mathbb{R}^n\!,w_k(x)dx)$
such that $\mathcal{F}_kf\!\in\!L^1(\mathbb{R}^n\!,w_k(\xi)d\xi)$,
we have
$$
f(x)=\mathcal{F}_k^2\!f(-x)\qquad\forall\;x\!\in\!\mathbb{R}^n.
$$
\item[(iv)] If $f$ is a radial function in $L^1(\mathbb{R}^n\!,w_k(\xi)d\xi)$ such that  $f(x)=\widetilde{f}(|x|)$, $x\in\mathbb{R}^n$. Then
$\mathcal{F}_k(f)$ is also radial and
\begin{equation}\label{rad}
    \mathcal{F}_k(f)(x)=d_k\int_0^\infty\widetilde{f}(s) \mathcal{J}_{\gamma_k+n/2-1}(s|x|)s^{2\gamma_k+n}ds; \quad x\in\mathbb{R}^n.
\end{equation}
where $d_k= 2^{-(\gamma_k+n/2-1)}/\Gamma(\gamma_k+n/2)$.
\end{itemize}
\par Let $x\in \mathbb{R}^n$, the
Dunkl translation operator $\tau_x$ is given for $f\in
L^2_k(\mathbb{R}^n,w_k(x)dx)$ by
\begin{eqnarray*}\label{dutr}
\mathcal{F}_k(\tau_x(f))(y)= \mathcal{F}_kf(y)\,E_k(x,iy), \quad
y\in\mathbb{R}^n.
\end{eqnarray*}
  In the case when $f(x)=\widetilde{f}(|x|)$ is a radial function in  $  \mathcal{S}(\mathbb{R}^n)$,  the Dunkl translation is represented by the following integral
\begin{eqnarray*}\label{trad}
\tau_x(f)(y)=
\int_{\mathbb{R}^{n}}\widetilde{f}\left( \sqrt{|y|^2+|x|^2+2\langle y,\eta\rangle}\;\right)d\mu_x(\eta),
 \end{eqnarray*}
This formula shows that the Dunkl translation operators can   be extended to all radial functions $f$ in $L^p (\mathbb{R}^n,w_k(x)dx)$, $1\leq p\leq \infty$  and the following holds
\begin{equation}\label{trp}
  \|\tau_x(f)\|_{p,k}\leq \|f\|_{p,k }\,.
  \end{equation}
where  $\|\,.\,\|_{p,k}$  denotes the  norm  of $L^p(\mathbb{R}^n,w_k(x)dx)$, $1\leq p<\infty$.
 \par We define  the Dunkl convolution product   for suitable functions $f$ and $g$ by
$$f*_kg(x)=\int_{\mathbb{R}^N} \tau_x(f)(-y)g(y)d\mu_k(y),\quad x\in\mathbb{R}^N.$$
We note that it is commutative and satisfies the following property:
\begin{eqnarray}\label{conv}
 \mathcal{F}_k(f*_kg)=\mathcal{F}_k(f)\mathcal{F}_k(g).
\end{eqnarray}
Moreover, the operator $ f \rightarrow f*_kg $ is bounded on $L^p (\mathbb{R}^n,w_k(x)dx)$
provide $g$ is a bounded radial function in $L^1(\mathbb{R}^n,w_k(x)dx)$. In particular we have the   the following Young's inequality:
\begin{equation}\label{Y}
 \|f*_kg\|_{p,k}\leq \|g\|_{1,k}\|f\|_{p,k}\;.
\end{equation}

\par  We  now come to the subject of this paper.
\subsection{Statement of Results}
Let  $\phi $ be  a $C^\infty$ radial function which vanishes for $|\xi|\leq 1/2$   and is equal to  $1$ for $|\xi| \geq 1$. Let $\alpha > 0$, we put
$$ m _{\alpha }(\xi)=|\xi|^{-\alpha}e^{\pm i|\xi| }\phi(  \xi  ).$$
The oscillating   Dunkl multiplier associated to $m_{\alpha }$
is defined  on $L^2(\mathbb{R}^n,w_k(x)dx)$ by
$$ T_{m_\alpha}(f)=\mathcal{F}_k^{-1}(m _{\alpha}\mathcal{F}_k(f)).$$
We will establish the following.
\begin{thm}\label{th1}
   $T _{m_\alpha}$  is a bounded  operator on $L^p(\mathbb{R}^n, w_k(x)dx)$   for all $1\leq p \leq\infty$ such that
  $$\alpha>(2\gamma_k+n-1)\left|\frac{1}{2}-\frac{1}{p}\right|.$$
\end{thm}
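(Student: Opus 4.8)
\emph{Proof proposal.} Write $N=2\gamma_k+n$ for the homogeneous dimension of the measure $w_k(x)\,dx$ (so that the radial part of this measure is $r^{N-1}\,dr$) and $\nu=\gamma_k+\tfrac n2-1$ for the Bessel order appearing in (\ref{rad}); the exponent to beat is $(N-1)\bigl|\tfrac12-\tfrac1p\bigr|$. The plan is to split $m_\alpha$ into Littlewood--Paley pieces, estimate each piece separately on $L^1$ and on $L^2$, and then recombine by interpolation and summation. Fix a radial $\psi\in C^\infty_c(\{\tfrac12\le|\xi|\le2\})$ with $\sum_{j\ge0}\psi(2^{-j}\xi)=1$ on $\{|\xi|\ge1\}$, and set $m_j(\xi)=|\xi|^{-\alpha}e^{\pm i|\xi|}\phi(\xi)\psi(2^{-j}\xi)$, so that $m_\alpha=\sum_{j\ge0}m_j$ with $m_j$ supported where $|\xi|\sim2^j$ (for $j\ge1$ the cutoff $\phi$ equals $1$ on this support, and the finitely many remaining pieces have Schwartz radial kernels, hence bound every $L^p$ trivially). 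Put $T_jf=\mathcal F_k^{-1}(m_j\mathcal F_kf)$. Since $m_j$ is radial, $K_j:=\mathcal F_k^{-1}(m_j)$ is radial, and by (\ref{conv}) one has $T_jf=f*_kK_j$; the radial inversion formula (\ref{rad}) represents the kernel as the Hankel-type integral
\begin{equation*}
K_j(x)=d_k\int_0^\infty s^{-\alpha}e^{\pm is}\,\phi(s)\,\psi(2^{-j}s)\,\mathcal J_\nu(s|x|)\,s^{N-1}\,ds .
\end{equation*}

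Two estimates for each $T_j$ drive the argument. On $L^2$, Plancherel (property (ii)) gives $\|T_j\|_{2\to2}=\|m_j\|_\infty\lesssim 2^{-j\alpha}$, which is summable for $\alpha>0$, the $p=2$ threshold. On $L^1$, Young's inequality (\ref{Y}) applied to the bounded radial kernel $K_j$ yields $\|T_j\|_{p\to p}\le\|K_j\|_{1,k}$ for all $p$, so the whole matter reduces to bounding $\|K_j\|_{1,k}$. Rescaling $s=2^jt$ and using the degree-$2\gamma_k$ homogeneity of $w_k$ gives $K_j(x)=2^{j(N-\alpha)}G_j(2^jx)$ and $\|K_j\|_{1,k}=2^{-j\alpha}\|G_j\|_{1,k}$, where
\begin{equation*}
G_j(y)=d_k\int_0^\infty t^{-\alpha}e^{\pm i2^jt}\,\psi(t)\,\mathcal J_\nu(t|y|)\,t^{N-1}\,dt .
\end{equation*}
The claim I aim to prove is $\|G_j\|_{1,k}\lesssim 2^{j(N-1)/2}$, equivalently $\|K_j\|_{1,k}\lesssim 2^{j\left(\frac{N-1}{2}-\alpha\right)}$, which is the sharp $p=1$ rate, summable exactly when $\alpha>(N-1)/2$.

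For this kernel estimate I would insert the large-argument asymptotic of the normalized Bessel function, $\mathcal J_\nu(z)=z^{-\nu-1/2}\bigl(a_+e^{iz}+a_-e^{-iz}\bigr)+O(z^{-\nu-3/2})$ for $z\gtrsim1$, together with the trivial bound $\mathcal J_\nu(z)=O(1)$ for $z\lesssim1$. Substituting into $G_j$ produces, on the region $|y|\gtrsim1$, oscillatory integrals in $t$ with phase $t(2^j\pm|y|)$ against a smooth amplitude supported in $t\sim1$. Repeated integration by parts (non-stationary phase) shows these are rapidly decaying in $2^j$ unless $|y|$ lies within a unit neighbourhood of $2^j$; the region $|y|\lesssim1$ is handled by the same integration by parts using only the phase $e^{\pm i2^jt}$. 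In the surviving annulus the prefactor $|y|^{-\nu-1/2}\sim2^{-j(\nu+1/2)}$ and the residual $t$-integral is $O(1)$, so that for every $M$
\begin{equation*}
|G_j(y)|\lesssim 2^{-j(\nu+1/2)}\bigl(1+\bigl|\,|y|-2^j\,\bigr|\bigr)^{-M}.
\end{equation*}
Integrating against $r^{N-1}\,dr$ and using $\nu+\tfrac12=\tfrac{N-1}2$ gives $\|G_j\|_{1,k}\lesssim 2^{-j(N-1)/2}\,2^{j(N-1)}=2^{j(N-1)/2}$, as claimed. This stationary/non-stationary phase analysis of the Hankel integral, where the external oscillation $e^{\pm i2^jt}$ must be combined with the oscillation hidden in the Bessel asymptotics while the error terms are tracked uniformly, is the main obstacle; everything else is routine.

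It remains to interpolate and sum. Riesz--Thorin between $\|T_j\|_{1\to1}\lesssim2^{j\left(\frac{N-1}{2}-\alpha\right)}$ and $\|T_j\|_{2\to2}\lesssim2^{-j\alpha}$ yields, for $1\le p\le2$,
\begin{equation*}
\|T_j\|_{p\to p}\lesssim 2^{\,j\left[(N-1)\left|\frac12-\frac1p\right|-\alpha\right]},
\end{equation*}
whose sum over $j\ge0$ converges precisely when $\alpha>(N-1)\bigl|\tfrac12-\tfrac1p\bigr|=(2\gamma_k+n-1)\bigl|\tfrac12-\tfrac1p\bigr|$. For $2\le p\le\infty$ one invokes duality: the adjoint of $T_{m_\alpha}$ is the operator of the same type with the conjugate symbol $|\xi|^{-\alpha}e^{\mp i|\xi|}\phi(\xi)$ and $\bigl|\tfrac12-\tfrac1p\bigr|=\bigl|\tfrac12-\tfrac1{p'}\bigr|$, which both closes the range and treats the two signs $\pm$ symmetrically. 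Summing the pieces then gives the boundedness of $T_{m_\alpha}=\sum_{j\ge0}T_j$ on $L^p(\mathbb R^n,w_k(x)\,dx)$ under the stated condition.
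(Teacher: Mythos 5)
Your proposal is correct, and while it shares the paper's overall skeleton (dyadic Littlewood--Paley decomposition of $m_\alpha$, an $L^1$ and an $L^2$ bound per piece, Riesz--Thorin, summation, duality for $p\ge 2$), the key $L^1$ ingredient is obtained by a genuinely different route. The paper, following Sj\"ostrand, never proves a sharp per-piece kernel bound: it fixes an auxiliary exponent $\alpha_0\in\,]\gamma_k+(n-1)/2,\gamma_k+(n+1)/2[$, shows by a lengthy analysis (Bessel asymptotics, behavior near the light cone $|x|=1$, boundedness near $0$, rapid decay at $\infty$) that the \emph{full} kernel $K_{\alpha_0}$ lies in $L^1(w_k)$ --- which requires a delicate truncation-plus-Fatou argument (Lemma \ref{rep}) because $K_{\alpha_0}$ is singular at $|x|=1$ and (\ref{Y}) does not apply directly --- and then writes each dyadic piece as $T_{m^\nu_\alpha}=2^{\nu(\alpha_0-\alpha)}T_{h_\nu}\circ T_{m_{\alpha_0}}$ to get $\|T_{m^\nu_\alpha}\|_{1\to1}\lesssim 2^{\nu(\alpha_0-\alpha)}$; the sharp threshold is recovered only in the limit $\alpha_0\downarrow\gamma_k+(n-1)/2$. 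You instead prove the sharp dyadic bound $\|K_j\|_{1,k}\lesssim 2^{j((2\gamma_k+n-1)/2-\alpha)}$ directly, by rescaling the radial Hankel integral and doing non-stationary phase on the phases $t(2^j\pm|y|)$, which localizes the kernel mass on the unit-width shell $|y|\approx 2^j$. This buys two things: each $K_j$ is a radial Schwartz function, so Young's inequality (\ref{Y}) applies verbatim and the paper's entire Lemma \ref{rep} machinery is unnecessary; and no limiting argument in $\alpha_0$ is needed at the interpolation stage. What the paper's route buys in exchange is the explicit description of the kernel $K_\alpha$ and of its singularity $(1-|x|)^{\alpha-\gamma_k-(n-1)/2-1}$ at the light cone, which is of independent interest, while avoiding any sharp oscillatory-integral estimate per piece.

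One technical caveat in your sketch: the uniform pointwise bound $|G_j(y)|\lesssim 2^{-j(\nu+1/2)}\bigl(1+\bigl||y|-2^j\bigr|\bigr)^{-M}$ is not literally produced by inserting the Bessel expansion, because the remainder $R_N(t|y|)$ does not localize near $|y|\approx 2^j$. You must either use derivative bounds on $R_N$ (which hold, compare (\ref{R}) in the paper) to integrate it by parts against $e^{\pm i2^jt}$ as well, or simply observe that its contribution to $\|G_j\|_{1,k}$ is $O(1)$ for $N$ large, which is harmless since the target bound $2^{j(2\gamma_k+n-1)/2}$ exceeds a constant. Either fix is routine, so this does not affect the validity of your argument.
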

The proof  follows  closely the proof  given by S. Sjostrand in  \cite{Sj}.
This contains   $L^p$ estimates of   the solution operator for the Cauchy problem associated to Dunkl wave equation
$$\Delta_k u(x,t)=\partial_t^2 u(x,t),\quad u(x,y)=0,\quad\partial_t u(x,o)=f(x)$$
 where  $\Delta_k=\sum_{i=1}^n(T_{e_i}^k)^2$, which  is referred to as the    Dunkl-Laplace operator on $\mathbb{R}^n$.
Note that the solution to this problem has been already described in  \cite{said1} and  is  given by means of Dunkl's transform,
$$\mathcal{F}_k(u(.,t))(\xi)= \frac{\sin t|\xi|}{|\xi|}\mathcal{F}_k (f)(\xi).$$
As the   result, we have
  \begin{thm} \label{th2}
  For fixed $t$,  the linear operator $f\rightarrow u(x,t)$  is  bounded  operator on $L^p(\mathbb{R}^n, w_k(x)dx)$ to itself,  for $1\leq p \leq\infty$ such that
  $$\left|\frac{1}{2}-\frac{1}{p}\right|<\frac{1}{2\gamma_k+n-1}.$$
 \end{thm}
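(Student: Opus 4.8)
The plan is to obtain Theorem \ref{th2} as a consequence of Theorem \ref{th1}, by first removing the parameter $t$ through a dilation argument and then splitting the wave multiplier $\frac{\sin t|\xi|}{|\xi|}$ into a low- and a high-frequency part. For $t>0$ write $\delta_t f(x)=f(tx)$. Since $w_k$ is homogeneous of degree $2\gamma_k$, one has $\|\delta_t f\|_{p,k}=t^{-(2\gamma_k+n)/p}\|f\|_{p,k}$, and the identity $E_k(\lambda x,y)=E_k(x,\lambda y)$ yields $\mathcal{F}_k(\delta_t f)(\xi)=t^{-(2\gamma_k+n)}\mathcal{F}_k f(\xi/t)$.

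First I would record the dilation invariance of multiplier norms: if $A$ has radial multiplier $a(\xi)$, a direct computation with the two formulas above shows that the operator with multiplier $a(t\,\cdot\,)$ equals $\delta_{1/t}A\,\delta_t$, and since the Jacobian factors $t^{\pm(2\gamma_k+n)/p}$ coming from $\delta_{1/t}$ and $\delta_t$ cancel, this conjugate has the same $L^p(\mathbb{R}^n,w_k\,dx)$ operator norm as $A$. Because $\frac{\sin t|\xi|}{|\xi|}=t\,a(t\xi)$ with $a(\eta)=\frac{\sin|\eta|}{|\eta|}$, the propagator $W_t:f\mapsto u(\cdot,t)$ factors as $W_t=t\,\delta_{1/t}\,W_1\,\delta_t$; hence $W_t$ is bounded on $L^p$ exactly when $W_1$ is, with $\|W_t\|_{p\to p}=t\,\|W_1\|_{p\to p}$. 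For fixed $t$ the statement is thus reduced to the $t=1$ multiplier $a(\xi)=\frac{\sin|\xi|}{|\xi|}$.

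Next I would decompose $a=a\,(1-\phi)+a\,\phi$ with the cutoff $\phi$ fixed above. The low-frequency factor $a(\xi)(1-\phi(\xi))$ is smooth, radial and compactly supported (the singularity of $\sin|\xi|/|\xi|$ at the origin is removable), so by property (i) its inverse Dunkl transform is a radial Schwartz function, in particular an element of $L^1(\mathbb{R}^n,w_k\,dx)$; the corresponding operator is Dunkl convolution with this kernel and is bounded on every $L^p$ by Young's inequality \eqref{Y}. For the high-frequency factor I would write $\frac{\sin|\xi|}{|\xi|}\phi(\xi)=\frac{1}{2i}\bigl(|\xi|^{-1}e^{i|\xi|}\phi(\xi)-|\xi|^{-1}e^{-i|\xi|}\phi(\xi)\bigr)$, which is precisely $\frac{1}{2i}(m_1-\overline{m_1})$ in the notation of Theorem \ref{th1} with $\alpha=1$. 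That theorem then yields boundedness on $L^p$ exactly when $1>(2\gamma_k+n-1)\bigl|\tfrac12-\tfrac1p\bigr|$, i.e. when $\bigl|\tfrac12-\tfrac1p\bigr|<\frac{1}{2\gamma_k+n-1}$, which is the asserted range.

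The main point is that there is no genuine obstacle beyond Theorem \ref{th1}: the high-frequency piece is a verbatim application of it at the critical exponent $\alpha=1$, and the dilation step is a bookkeeping computation based on homogeneity. The only place demanding a separate argument—and the reason Theorem \ref{th1} cannot be invoked for the full multiplier at once—is the low-frequency piece, where $\sin|\xi|/|\xi|$ is smooth rather than singular and must therefore be peeled off and handled by the elementary convolution estimate above.
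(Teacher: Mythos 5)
Your proposal is correct and follows essentially the same route as the paper: split $\frac{\sin|\xi|}{|\xi|}$ into the two oscillating pieces $\frac{e^{\pm i|\xi|}}{2i|\xi|}\phi(\xi)$ handled by Theorem \ref{th1} with $\alpha=1$, plus the smooth compactly supported remainder handled by Young's inequality \eqref{Y} via a radial Schwartz kernel. The only difference is that you make explicit the dilation reduction $W_t=t\,\delta_{1/t}W_1\delta_t$ to pass from fixed $t$ to $t=1$, a step the paper leaves implicit; this is a welcome clarification but not a different argument.
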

 Theorem \ref{th3} is therefore the  special case $\alpha=1$ of theorem \ref{th2}.  Indeed, we just write
 $$\frac{\sin(|\xi|)}{|\xi|} =\frac{e^{i|\xi|}}{2i|\xi|} \phi( \xi)-\frac{e^{-i|\xi|}}{2i|\xi|} \phi(\xi )+
 \frac{\sin(|\xi|)}{|\xi|}(1-\phi(\xi ))=a_1(\xi)+a_2(\xi)+a_3(\xi).$$
As $a_3$ is radial $C^\infty$-function with compact support, the Dunkl multiplier associated to $ a_3 $ is   the convolution operator with kernel $\mathcal{F}_k^{-1}(a_3)$ which is a  radial Schwartz function and then by Young's inequality it  is a bounded operator on $L^p(\mathbb{R}^n, w_k(x)dx)$ for all $1\leq p\leq\infty$ .
\begin{rem}
 In the statements  of theorems \ref{th1} and \ref{th2} nothing can be said about the endpoint of the range of $p$. Adaptation of  classical arguments as in \cite{Feff, MIY, Per} is not available,
 in the Dunkl  setting  the theory of Hardy space  $H^1$ and BMO space  are  not yet much elaborated. Also the $L^p$ theory of Dunkl multiplier is still ambiguous, since the latter is closely related to the generalized translation operators which  require  more information   than its known properties, in particular about their integral representations.
\end{rem}
The next main result is the boundedness of the  maximal  operator
  $$A_\alpha(f)(x)=\sup_{  t>0}|A_\alpha(f)(x,t)|, \quad $$
 where
 $$A_\alpha (f)(x,t)=\int_{ \mathbb{R}^n} \frac{e^{\pm it|\xi|}}{|t\xi|^k}\phi(t\xi)\mathcal{F}(f) (\xi)E_k(ix,\xi)w_k(\xi)d\xi.$$
\begin{thm} \label{th3}
 The maximal operator $A_\alpha$  is  bounded   on $L^p(\mathbb{R}^n, w_k(x)dx)$ , provide
 $$\alpha>( 2\gamma_k+n-1 )\left|\frac{1}{p}-\frac{1}{2}\right|+ \frac{1}{p}$$
 \end{thm}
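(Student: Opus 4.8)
The plan is to handle $A_\alpha$ by Stein's complex interpolation of analytic families, the mechanism used for the maximal spherical means in \cite{Stein}. Since $m_\alpha$ is radial, each $A_\alpha(f)(\cdot,t)$ is the Dunkl convolution $f*_kK^\alpha_t$ with the radial kernel $K^\alpha_t=\mathcal F_k^{-1}(m_\alpha(t\,\cdot))$, and the homogeneity $w_k(tx)=t^{2\gamma_k}w_k(x)$ gives the dilation relation $K^\alpha_t(x)=t^{-(2\gamma_k+n)}K^\alpha_1(x/t)$. Because $A_\alpha$ is sublinear while the assertion is diagonal in $p$ (one $\alpha$ must serve every $p$), I would first linearize the supremum: fix a measurable $t:\mathbb R^n\to(0,\infty)$, set $A^{t(\cdot)}_\alpha f(x)=f*_kK^\alpha_{t(x)}(x)$, and observe that a bound for $A^{t(\cdot)}_\alpha$ uniform in $t(\cdot)$ yields the bound for $A_\alpha$. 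I then embed this in the analytic family $A^{t(\cdot)}_z f(x)=f*_kK^z_{t(x)}(x)$, with $K^z_t=\mathcal F^{-1}_k(m_z(t\,\cdot))$, $m_z(\xi)=|\xi|^{-z}e^{\pm i|\xi|}\phi(\xi)$, regularized by a factor such as $e^{z^2}$ so that the family is of admissible growth.

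The first ingredient is the sharp $L^2$ estimate, valid for $\operatorname{Re}z>1/2$. Here I would control the supremum in $t$ by a Sobolev embedding on the multiplicative group: for $s>1/2$,
\[\sup_{t>0}|F(t)|^2\lesssim\Big(\int_0^\infty|F(t)|^2\tfrac{dt}{t}\Big)^{1-\frac1{2s}}\Big(\int_0^\infty|(t\partial_t)^sF(t)|^2\tfrac{dt}{t}\Big)^{\frac1{2s}},\]
applied to $F(t)=A_z f(x,t)$. After Hölder in $x$ both double integrals are evaluated by Plancherel, and
\[\int_0^\infty|(t\partial_t)^sm_z(t\xi)|^2\tfrac{dt}{t}\approx\int_0^\infty(t|\xi|)^{2(s-\operatorname{Re}z)}\phi(t|\xi|)^2\tfrac{dt}{t}<\infty\]
precisely when $\operatorname{Re}z>s$. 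Letting $s\downarrow1/2$ gives $\|A_z\|_{L^2\to L^2}\le C(1+|\operatorname{Im}z|)^M$ on every line $\operatorname{Re}z=\tfrac12+\varepsilon$, a bound that dominates the linearized operator uniformly in $t(\cdot)$.

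The second ingredient is the pair of endpoint estimates coming from kernel bounds. On $L^\infty$ the dilation invariance of the $L^1(w_k)$ norm gives $\|A_z^{t(\cdot)}f\|_\infty\le\|f\|_\infty\sup_t\|K^z_t\|_{1,k}=\|f\|_\infty\|K^z_1\|_{1,k}$, so the only task is $K^z_1\in L^1(w_k)$ for $\operatorname{Re}z>D/2$, where $D=2\gamma_k+n-1$. Using the radial inversion formula \eqref{rad} and the large–argument asymptotics of $\mathcal J_{\gamma_k+n/2-1}$, the kernel reduces to a one–dimensional oscillatory integral whose stationary/non–stationary phase analysis yields $|K^z_1(x)|\lesssim|x|^{-D/2}\,\big|1-|x|\big|^{\operatorname{Re}z-(D+2)/2}$ near the sphere $|x|=1$, with rapid decay elsewhere; this is $w_k$–integrable exactly for $\operatorname{Re}z>D/2$. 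For the $L^1$ side the supremum in $t$ now interacts with the space variable, and the quantity to control is $\sup_y\int_{\mathbb R^n}\sup_t|\tau_xK^z_t(-y)|\,w_k(x)\,dx$; the same singularity analysis, integrated in $x$ for fixed $y$ over the lower–dimensional resonance set, costs one extra degree and gives the $L^1$ bound for $\operatorname{Re}z>D/2+1$.

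Finally I would apply Stein's interpolation theorem \cite{Stein}. Interpolating the line $\operatorname{Re}z=\tfrac12$ against $\operatorname{Re}z=D/2$ at $\tfrac1p=\tfrac{1-\theta}2$ produces the exponent $\alpha=D\big|\tfrac1p-\tfrac12\big|+\tfrac1p$ for $2\le p\le\infty$, while interpolating $\operatorname{Re}z=\tfrac12$ against $\operatorname{Re}z=D/2+1$ gives the identical formula for $1\le p\le2$; taking the supremum over the linearizations $t(\cdot)$ then closes the argument. The main obstacle is the kernel analysis of the third paragraph: extracting the sharp singularity of $K^z_1$ on $|x|=1$ from the Dunkl/Hankel representation, and, harder still, controlling the linearized $L^1$ kernel, since $\sup_t|K^z_t|$ is not even locally integrable and one must genuinely use that the resonance set is lower dimensional to recover the extra $+1$ that distinguishes the $p=1$ and $p=\infty$ endpoints.
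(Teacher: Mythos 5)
Your $p\ge 2$ half is viable and is genuinely Stein's scheme rather than the paper's: you linearize the supremum, prove the maximal $L^2$ bound for $\operatorname{Re}z>1/2$ by a Sobolev embedding in $t$ plus Plancherel, prove the $L^\infty$ bound for $\operatorname{Re}z>D/2$ (with $D=2\gamma_k+n-1$) from $K^z_1\in L^1(w_k)$ and dilation invariance of $\|\cdot\|_{1,k}$, and interpolate the analytic family. The paper instead dyadically decomposes the multiplier, controls $\sup_{1\le t\le 2}$ on each piece by the elementary Lemma \ref{l5}, and sums over time scales with a Littlewood--Paley square function from \cite{Dai}; your route replaces that machinery by the kernel analysis of $K^z_1$ near $\{|x|=1\}$, which the paper carries out anyway for Theorem \ref{th1}, and both give $\alpha>D\left|\frac1p-\frac12\right|+\frac1p$ for $2\le p\le\infty$.

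The $p<2$ half, however, rests on a claim that is false, and no refinement of the kernel analysis can rescue it: there is no $L^1\to L^1$ bound, uniform in the linearizing function $t(\cdot)$, for any $z$, however large $\operatorname{Re}z$ is. Such a uniform bound is equivalent to $L^1(w_k)$ boundedness of the maximal operator $f\mapsto\sup_{t>0}|f*_kK^z_t|$, and this fails for the same structural reason the Hardy--Littlewood maximal operator fails on $L^1$: the obstruction is the dilation family, not the singularity of $K^z_1$ on the sphere, so "lower dimensionality of the resonance set" is irrelevant. Concretely (already for $k=0$, a case the theorem covers): let $f$ run through an $L^1$-normalized approximate identity, so that $f*_kK^z_t(x)\to K^z_t(x)$ pointwise; since $K^z_1$ is radial, continuous off $\{|x|=1\}$ and not identically zero, pick $r_0$ with $K^z_1$ nonvanishing at radius $r_0$ and choose $t=|x|/r_0$ to get $\sup_{t>0}|K^z_t(x)|\geq c\,|x|^{-(2\gamma_k+n)}$, whose integral against $w_k(x)dx\approx r^{2\gamma_k+n-1}dr\,d\sigma$ diverges logarithmically at $0$ and at $\infty$; Fatou then forces the uniform $L^1$ constant to be infinite. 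The repair is exactly what the paper does for $p<2$: for $\operatorname{Re}z>\gamma_k+(n+1)/2=D/2+1$ the kernel satisfies $|K^z_1(x)|\lesssim(1+|x|)^{-2\gamma_k-n-1}$, so by the Thangavelu--Xu maximal theorem (Lemma \ref{l8}, from \cite{Xu}) the maximal operator is bounded on $L^q(w_k)$ for every $q>1$ --- but not $q=1$ --- and one interpolates these lines against your $L^2$ line, letting $q\downarrow 1$ and $\operatorname{Re}z\downarrow D/2+1$ at the end; the interpolated exponent then tends to $D\left(\frac1p-\frac12\right)+\frac1p$, recovering the full range $1<p<2$. This also shows the statement must be read with $1<p<\infty$: at $p=1$ the condition $\alpha>D/2+1$ cannot yield $L^1$ boundedness of the maximal operator.
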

Our arguments inspired by \cite{Sogge} for  the  study of the boundedness of certain Fourier integral operators.
   As an application,   we provide an
extension of the result in \cite{Stein}, obtaining boundedness for the  maximal function $\mathcal{M}_\alpha(f)(x) =\sup_{t>o} |\mathcal{M}_\alpha(f)(x,t)|$ where
 $$\mathcal{M}_\alpha(f)(x,t)= \int_{\mathbb{R}^n}\mathcal{J}_{\alpha+\gamma_k+n/2-1}(t|\xi|)\mathcal{F}_k(f)(\xi)w_k(\xi)d\xi$$
In particular, $\mathcal{M}_0$  reduces to the analogous of Stein's spherical maximal function,
   $$ \mathcal{M}_0(f)(x)=\sup_{t>0}\left|\int_{S^{n-1}} \tau_x(f)(ty)  d\sigma(y)\right|$$
 where   $S^{n-1}$ denotes the standard unit sphere in $\mathbb{R}^n$ and $d\sigma$  corresponds to the normalized surface measure. We obtain the following
   \begin{thm}\label{th4}
 The  maximal operator $\mathcal{M}_\alpha$ is  bounded   on $L^p(\mathbb{R}^n, w_k(x)dx)$ under the following conditions
 \begin{itemize}
   \item[(a)] $\alpha  > 1-2\gamma_k-n+(2\gamma_k+n)/p$, $\qquad $if  $ $\quad$  2\leq p <\infty,$
   \item[(b)] $\alpha> (2-2\gamma_k-n)/p$, $\qquad $ if $\quad$ $ 1< p \leq 2$.
 \end{itemize}
 \end{thm}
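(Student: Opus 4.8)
The plan is to derive Theorem~\ref{th4} from the maximal oscillating--multiplier bound of Theorem~\ref{th3} via the large--argument asymptotics of the Bessel function, reserving Stein's analytic interpolation (as in \cite{Stein}) for the sharp endpoints. Set $\nu=\alpha+\gamma_k+\frac{n}{2}-1$, so that $\mathcal{M}_\alpha(\cdot,t)$ is the Dunkl multiplier operator with symbol $\mathcal{J}_\nu(t|\xi|)$, fix a radial cut--off $\chi_0=1$ on $\{|z|\le1\}$ with $\supp\chi_0\subset\{|z|\le2\}$, and decompose $\mathcal{J}_\nu(t|\xi|)=\mathcal{J}_\nu(t|\xi|)\chi_0(t|\xi|)+\mathcal{J}_\nu(t|\xi|)(1-\chi_0(t|\xi|))$. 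The first (local) symbol is a fixed smooth, compactly supported radial function of $t\xi$; by the scaling of the radial transform \eqref{rad} its Dunkl kernel is a dilate $t^{-(2\gamma_k+n)}\Psi(\cdot/t)$ of a single radial Schwartz function $\Psi$, so by \eqref{Y} and the standard maximal--function domination the associated maximal operator is controlled pointwise by the Dunkl--Hardy--Littlewood maximal function and is $L^p(\mathbb{R}^n,w_k dx)$--bounded for every $p>1$. This is precisely where the restriction $p>1$ originates.

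For the oscillatory symbol I would insert the asymptotic expansion $\mathcal{J}_\nu(z)=\sum_{j=0}^{N}(c_j^+e^{iz}+c_j^-e^{-iz})z^{-\nu-\frac{1}{2}-j}+R_N(z)$ for $z\ge1$, where $R_N$ and its derivatives decay like $z^{-\nu-N-\frac{3}{2}}$. After multiplication by $1-\chi_0$ this realizes the oscillatory part, for both signs of the phase, as a finite linear combination of the operators $A_{\alpha'+j}$ of Theorem~\ref{th3} with $\alpha'=\nu+\frac{1}{2}=\alpha+\gamma_k+\frac{n}{2}-\frac{1}{2}$ (the mismatch between $1-\chi_0$ and $\phi$ being a compactly supported smooth factor, harmless by \eqref{Y}), plus a remainder whose dilated kernel is radial and integrable and hence again controlled by the maximal function. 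Because enlarging the exponent only weakens the hypothesis of Theorem~\ref{th3}, the term $j=0$ is decisive: Theorem~\ref{th3} yields the bound as soon as $\alpha'>(2\gamma_k+n-1)\abs{\frac{1}{p}-\frac{1}{2}}+\frac{1}{p}$, that is $\alpha>(2\gamma_k+n-1)\abs{\frac{1}{p}-\frac{1}{2}}+\frac{1}{p}-\gamma_k-\frac{n}{2}+\frac{1}{2}$, which upon separating $p\ge2$ from $p\le2$ produces exactly the pair of conditions (a)--(b).

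To obtain these directly, and to make the link with \cite{Stein} transparent, I would alternatively linearize the supremum by a measurable selection $t=t(x)$ and apply Stein interpolation to the analytic family $z\mapsto\mathcal{M}_z$ between an $L^2$ estimate, valid for $\Re z>1-\gamma_k-\frac{n}{2}$ and coming from Plancherel and the scale--invariant square function
\[
\sup_{t>0}\abs{\mathcal{M}_z f(x,t)}^2\lesssim\Bigl(\int_0^\infty\abs{\mathcal{M}_z f(x,t)}^2\frac{dt}{t}\Bigr)^{1/2}\Bigl(\int_0^\infty\abs{t\,\partial_t\mathcal{M}_z f(x,t)}^2\frac{dt}{t}\Bigr)^{1/2},
\]
and the trivial $L^\infty$ estimate for $\Re z>1-2\gamma_k-n$, where $\mathcal{M}_z(\cdot,t)$ is an average against a finite Dunkl--translated measure. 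The hard part, in either route, is the uniform control of the supremum in $t$: one must bound the $L^1$--mass of the dilated Dunkl kernels of the family uniformly in $t$ and with only polynomial growth in $\Im z$, and for $\Re z\le0$ these are distributions of negative order that must be reached through analytic continuation of the Riesz--type smoothing. It is exactly here that the still rudimentary theory of the Dunkl translation, and the absence of a developed $H^1$/BMO theory, obstruct the endpoints $p=1,\infty$, as recorded in the Remark; the square function is moreover lossy on oscillatory symbols, so attaining the sharp threshold $1-\gamma_k-\frac{n}{2}$ requires exploiting $\partial_t e^{\pm it|\xi|}=\pm i|\xi|e^{\pm it|\xi|}$ rather than differentiating crudely.
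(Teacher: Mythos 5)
Your proposal is correct and follows essentially the same route as the paper's own proof: the same smooth cutoff splitting of $\mathcal{J}_{\alpha+\gamma_k+n/2-1}(t|\xi|)$ into a local and an oscillatory part, the same large-argument Bessel expansion reducing the oscillatory part to the operators $A_{\alpha+\gamma_k+(n-1)/2+j}$ of Theorem \ref{th3} (with $j=0$ decisive), and the same treatment of the local piece and of the remainder $R_N$ via the Thangavelu--Xu maximal lemma (Lemma \ref{l8}), which is the precise Dunkl-setting substitute for your appeal to ``standard maximal-function domination'' (your third, Stein-interpolation paragraph is not needed and is not what the paper does). One point worth recording: your computation yields $\alpha>(2-2\gamma_k-n)/p$ for $2\le p<\infty$ and $\alpha>1-2\gamma_k-n+(2\gamma_k+n)/p$ for $1<p\le 2$, which agrees with the conclusion of the paper's proof and with the Corollary on Stein's spherical maximal function, so the ranges of $p$ attached to items (a) and (b) in the theorem as printed are interchanged --- a typo in the statement, not a flaw in your argument.
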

As a direct consequence of Theorem \ref{th4} we get the following result.
\begin{cor}Stein's spherical maximal function $ \mathcal{M}_0$ is  bounded   on $L^p(\mathbb{R}^n, w_k(x)dx)$, $n\geq 3$,  provide
  $$p> \frac{2\gamma_k+n}{2\gamma_k+n-1}\;.$$
 \end{cor}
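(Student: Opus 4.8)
The plan is to derive the Corollary by specializing Theorem \ref{th4} to $\alpha=0$, once the operator $\mathcal{M}_0$ has been identified with the geometric Dunkl spherical maximal function. The first task is therefore to justify the identity $\mathcal{M}_0(f)(x,t)=\int_{S^{n-1}}\tau_x(f)(ty)\,d\sigma(y)$ asserted in the text. I would argue that the normalized surface measure $d\sigma_t$ on the sphere of radius $t$ is a radial compactly supported distribution whose Dunkl transform is computed from the radial formula (\ref{rad}): taking $\widetilde f$ to be the unit mass on $\{s=t\}$ yields $\mathcal{F}_k(d\sigma_t)(\xi)=\mathcal{J}_{\gamma_k+n/2-1}(t|\xi|)$. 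Combining this with the convolution identity (\ref{conv}) and the integral representation of $\tau_x$ on radial functions, the Dunkl convolution $f*_k d\sigma_t(x)=\int_{S^{n-1}}\tau_x(f)(ty)\,d\sigma(y)$ has Dunkl transform $\mathcal{J}_{\gamma_k+n/2-1}(t|\xi|)\,\mathcal{F}_k(f)(\xi)$; after inversion this is exactly $\mathcal{M}_0(f)(\cdot,t)=\mathcal{F}_k^{-1}\!\left[\mathcal{J}_{\gamma_k+n/2-1}(t|\cdot|)\,\mathcal{F}_k f\right]$. Taking the supremum over $t>0$ identifies the two maximal operators, so the Corollary becomes the case $\alpha=0$ of Theorem \ref{th4}.

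The remaining step is purely arithmetic: determine those $p$ for which $\alpha=0$ satisfies conditions (a)--(b). Write $Q=2\gamma_k+n$. For $p\ge 2$, condition (a) reads $0>1-Q+Q/p$, which rearranges to $p>Q/(Q-1)$; thus the threshold in the Corollary is precisely the $\alpha=0$ boundary of (a). For $1<p\le 2$, condition (b) reads $0>(2-Q)/p$, i.e. $Q>2$. The hypothesis $n\ge 3$ forces $Q=2\gamma_k+n\ge 3>2$, so that $Q/(Q-1)\le 3/2<2$.

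Consequently condition (a) holds for every $p\ge 2$, the threshold $Q/(Q-1)$ lying below $2$, while condition (b) holds for every $p\in(1,2]$, in particular on the interval $\bigl(Q/(Q-1),\,2\bigr]$. The two ranges join across $p=2$ and yield boundedness of $\mathcal{M}_0$ on $L^p(\mathbb{R}^n,w_k(x)dx)$ for all $p>Q/(Q-1)=(2\gamma_k+n)/(2\gamma_k+n-1)$, which is the assertion. As a sanity check, the unperturbed case $k\equiv 0$ has $\gamma_k=0$ and $Q=n$, recovering Stein's classical range $p>n/(n-1)$.

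The deduction from Theorem \ref{th4} is immediate; the only genuinely non-routine point is the first step, namely verifying that $\mathcal{M}_0$ \emph{is} the spherical mean. This requires justifying that the radial Dunkl-transform formula (\ref{rad}) applies to the singular surface measure $d\sigma_t$ (for instance by a limiting argument from smooth radial approximations, with supports controlled inside $\co(G.x)$) and that the radial translation formula represents $f*_k d\sigma_t$ as the spherical average over $S^{n-1}$. I expect this identification, rather than the inequality bookkeeping, to be where care is needed.
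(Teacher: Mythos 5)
Your proposal is correct and takes the same route as the paper: the paper offers no argument for the Corollary beyond ``direct consequence of Theorem \ref{th4}'', i.e.\ setting $\alpha=0$ and performing exactly the arithmetic you did, and your sanity check that $k\equiv 0$ recovers Stein's classical range $p>n/(n-1)$ is apt. Your preliminary step, identifying $\mathcal{M}_0$ with the Dunkl spherical means via the Dunkl transform of the (weighted) surface measure, is asserted in the paper without proof, so writing it out is a useful addition rather than a deviation; as you say, that is where the analytic care lies.

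One caveat. You paired the conditions with the $p$-ranges as in the introduction's statement of Theorem \ref{th4}: (a) with $2\le p<\infty$, (b) with $1<p\le 2$. That pairing is inconsistent with what the paper's own proof establishes at the end of Section 2.3, where $\alpha>1-2\gamma_k-n+(2\gamma_k+n)/p$ is attached to $1<p\le 2$ and $\alpha>(2-2\gamma_k-n)/p$ to $2\le p<\infty$; the introduction's labels are a typo. The point is not cosmetic: under your reading, your own computation shows that for $\alpha=0$ and $n\ge 3$ \emph{both} conditions hold for every $p>1$, so the theorem as you quote it would yield boundedness of $\mathcal{M}_0$ on $L^p$ for all $p>1$ --- false already for $k\equiv 0$, where the spherical maximal operator is unbounded for $1<p\le n/(n-1)$ --- and that should have been a red flag. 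With the corrected pairing the same two computations assemble properly: writing $Q=2\gamma_k+n$, the constraint $p>Q/(Q-1)$ is the binding one on the range $1<p\le 2$, while for $p\ge 2$ one only needs $Q>2$ (automatic for $n\ge 3$); the union of the two ranges is again $p>Q/(Q-1)=(2\gamma_k+n)/(2\gamma_k+n-1)$. So your conclusion and final range are right, and after the swap your argument coincides with the paper's.
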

\section{ Details of the Proofs}
We begin by choosing the Littlewood-Paley dyadic decomposition of unity   that we  shall use it all along this section. The existence of such a partition is standard, it is given by
  a   $C^\infty$-function  $\psi$   that is  supported in   $\{t\in \mathbb{R};\; 1/2\leq|t|\leq2\}$ and satisfies
  $$\sum_{-\infty}^\infty\psi(2^{-\nu} t)=1,\qquad t\neq 0.$$
We  can assume further that $0\leq \psi\leq 1$.
\par For convenience, we use the same notation $C$ to
denote the different constants in the different place.

  \subsection{ Proof of Theorem \ref{th1}}
  Without loss of generality  we will  prove the theorem \ref{th1} by taking  $\phi$    the function
   \begin{equation}\label{fi}
 \phi(\xi)=\widetilde{\phi}(|\xi|)=\sum_{\nu=0}^\infty\psi(2^{-\nu} |\xi|),
   \end{equation}
    which is clearly    a  $ C^\infty$-function on $\mathbb{R}^n$, with $\phi(\xi)=0$ for $|\xi|\leq 1/2$ and $\phi(\xi)=1$ for $|\xi|\geq 1$.
 Indeed, let  $\phi_1$ be an arbitrary   $ C^\infty$-function on $\mathbb{R}^n$, with $\phi_1(\xi)=0$ for $|\xi|\leq 1/2$ and $\phi_1(\xi)=1$ for $|\xi|\geq 1$. We write
$$m_\alpha(\xi)=|\xi|^{-\alpha}e^{i|\xi|}(\phi_1( \xi )-\phi( \xi))+|\xi|^{-\alpha}e^{i|\xi|}\phi(\xi)=m_\alpha^{(1)}+m_\alpha^{(2)}.$$
Since $m_\alpha^{(1)}$ is a $ C^\infty$-function with compact support, then the    corresponding   multiplier  is a bounded linear operator on $L^p(\mathbb{R}^n,w_k(x)dx)$, $1\leq p\leq \infty$.
  \par  Assume as  a first step  that   $\alpha\in]\gamma_k+(n-1)/2, \gamma_k+(n+1)/2[$. Here we shall prove that
  $T_{m_\alpha}$ is a  convolution operator with kernel $K_\alpha$ belongs to  $ L^1(\mathbb{R}^n,w_k(x)dx)$.
  \par We begin,  by writing via (\ref{fi})
 \begin{equation}\label{malpha}
    m_\alpha(\xi)=\sum_{\nu=0}^\infty m^\nu_\alpha(\xi),
    \end{equation}
  where  $m^\nu_\alpha(\xi)=\psi(2^{-\nu}|\xi|)|\xi|^{-\alpha}e^{\pm i|\xi|}$.
 Since the function $m^\nu_\alpha$ is radial, it follows that   $K^\nu_\alpha= \mathcal{F}_k^{-1}(m^\nu_\alpha) $ is radial and from (\ref{rad}),
  \begin{eqnarray*}
  K_\alpha^\nu(x) =d_k\int_{0}^\infty \psi(2^{-\nu}s)  s ^{-\alpha+2\gamma_k+n-1}e^{is} \mathcal{J}_{\gamma_k+n/2-1}(|x|s)ds.
\end{eqnarray*}
 In the next we  claim that the sum $\sum K_\alpha^\nu(x)$ is convergent for $|x|\neq 0,1$ and we have
\begin{eqnarray*}
 K_\alpha(x)= \sum _{\nu=0}^\infty  K_\alpha^\nu(x)= d_k\int_{0}^\infty \widetilde{ \phi}(s) s ^{-\alpha+2\gamma_k+n-1}e^{is} \mathcal{J}_{\gamma_k+n/2-1}(|x|s)ds
\end{eqnarray*}
 For this purpose we recall the appropriate asymptotic expansion of Bessel function (see \cite{Wat}),
 \begin{eqnarray}\label{bvB}\nonumber
 \mathcal{J}_{\gamma_k+n/2-1}(|x|s)&=&e^{i|x|s}\sum_{\ell=0}^{N-1}a_\ell (|x|s)^{-\gamma_k-(n-1)/2-\ell  }\\&&+e^{-i|x|s}\sum_{\ell=0}^{N-1}a'_\ell (|x|s)^{-\gamma_k-(n-1)/2-\ell  }+R_N(|x|s) ,
 \end{eqnarray}
where $a_\ell$ and $a'_\ell$ are constants and  the function $R_N$ satisfies the estimate
\begin{eqnarray}\label{RN}
 |R_N(t)|\leq c_N|t|^{-N-\gamma_k-(n-1)/2 }.
\end{eqnarray}
Thus for a large enough $N$, one  can write
\begin{eqnarray*}
K_\alpha^\nu(x)= d_k\sum_{\ell=0}^{N-1}
  |x|^{-\gamma_k-(n-1)/2-\ell  }\Big\{a_\ell\;  f_\alpha^{\nu,\,\ell} (1+|x|)+a_\ell' \;  f_\alpha^{\nu,\,\ell} (1-|x|)\Big\}+ d_k G_\alpha^{N,\,\nu}(x)
\end{eqnarray*}
where we put
\begin{eqnarray*}
f_\alpha^{\nu,\,\ell}(t)&=& \int_{0}^\infty \psi(2^{-\nu}s)  s^{-\alpha+ \gamma_k+ (n-1)/2-\ell }e^{its}ds, \\
G_\alpha^{N,\,\nu}(x)&= & \int_{0}^\infty \psi(2^{-\nu}s)  s ^{-\alpha+2\gamma_k+n-1}R_N(|x|s)\;e^{is} \;ds.
 \end{eqnarray*}
After possibly interchanging  sum and integral,  we have
\begin{eqnarray*}
\sum_{\nu=0}^\infty G_{N,\nu}(x) &= &   \int_{0}^\infty \widetilde{\phi}( s)  s ^{-\alpha+2\gamma_k+n-1}R_N(|x|s)\;e^{is} \;ds,\\
\sum_{\nu=0}^\infty f_{\nu,\ell}(t)&=& \int_{0}^\infty  \widetilde{\phi}(s) s^{-\alpha+ \gamma_k+ (n-1)/2-\ell }e^{its}ds; \quad \ell\geq 1.
 \end{eqnarray*}
When for  $\ell=0$ we shall need to integrate by parts,
\begin{eqnarray*}
&&\int_{0}^\infty \psi(2^{-\nu}s) s^{-\alpha+ \gamma_k+ (n-1)/2 }e^{its}ds=
 it^{-1}\int_{0}^\infty2^{-\nu} \psi'(2^{-\nu}s)  s^{-\alpha+ \gamma_k+ (n-1)/2 }e^{its}ds\\&&+i(-\alpha+ \gamma_k+ (n-1)/2)t^{-1}\int_{0}^\infty \psi(2^{-\nu}s)  s^{-\alpha+ \gamma_k+ (n-1)/2-1 }e^{its}ds.
 \end{eqnarray*}
The  interchange of summation being permissible because
 \begin{eqnarray*}
   \sum_{\nu=0}^\infty\int_{0 }^\infty|2^{-\nu} \psi'(2^{-\nu}s) s^{-\alpha+ \gamma_k+ (n-1)/2 }e^{its}| &=&
  \sum_{\nu=0}^\infty \int_{0}^\infty|2^{-\nu}s\; \psi'(2^{-\nu}s) |s^{-\alpha+ \gamma_k+ (n-1)/2 -1} ds\\
& \leq&  C  \sum_{\nu=0}^\infty\int_{2^{\nu-1}} ^{2^{\nu+1} }   s^{-\alpha+ \gamma_k+ (n-1)/2 -1} ds\\
&\leq & 2C\int_{1/2} ^{\infty }   s^{-\alpha+ \gamma_k+ (n-1)/2 -1} ds <\infty.
 \end{eqnarray*}
Thus,
 $$ \sum_{\nu=0}^\infty \int_{0}^\infty2^{-\nu} \psi'(2^{-\nu}s)  s^{-\alpha+ \gamma_k+ (n-1)/2 }e^{its}ds= \int_{0}^\infty \widetilde{\phi}'(s)  s^{-\alpha+ \gamma_k+ (n-1)/2 }e^{its}ds.$$
and
  \begin{eqnarray*}
\sum_{\nu=0}^\infty f_\alpha^{\nu,0}(t)&=& it^{-1}\int_{0}^\infty  \widetilde{\phi}'(s) s^{-\alpha+ \gamma_k+ (n-1)/2  }e^{its}ds
 \\&+& i(-\alpha+ \gamma_k+ (n-1)/2)t^{-1}
 \int_{0}^\infty \widetilde{ \phi}(s) s^{-\alpha+ \gamma_k+ (n-1)/2-1}e^{its}ds
   \\&=&   \int_{0}^\infty  \widetilde{\phi}(s) s^{-\alpha+ \gamma_k+ (n-1)/2  }e^{its}ds.
\end{eqnarray*}
We therefore conclude that
\begin{eqnarray}\label{DK}
\nonumber&&K_\alpha(x)=  d_k \sum_{\ell=0}^{N-1}
  |x|^{-\gamma_k-(n-1)/2-\ell  } \Big\{a_\ell \; \int_{0}^\infty  \widetilde{\phi}(s) s^{-\alpha+ \gamma_k+ (n-1)/2-\ell }e^{is}e^{i|x|s}ds\\&&+
  a_\ell' \; \int_{0}^\infty  \widetilde{\phi}(s) s^{-\alpha+ \gamma_k+ (n-1)/2-\ell }e^{is}e^{-i|x|s}ds.\Big\} +d_k
  \int_{0}^\infty  \widetilde{\phi}( s)  s ^{-\alpha+2\gamma_k+n-1}R_N(|x|s)\;e^{is} \;ds.\nonumber\\&&
\end{eqnarray}
and in view of (\ref{bvB})
\begin{eqnarray*}
 K_\alpha(x)= d_k\int_{0}^\infty \widetilde{ \phi}(s) s ^{-\alpha+2\gamma_k+n-1}e^{is} \mathcal{J}_{\gamma_k+n/2-1}(|x|s)ds.
 \end{eqnarray*}
  In order to study the behavior of $K_\alpha$ we will  need the following  elementary lemmas.
  \begin{lem} \label{l2} Let $\alpha>0$. The function given by
  $$h(t)=\int_0^\infty\phi(s)s^{-\alpha}e^{\pm its} ds, \qquad t\neq0$$
  satisfies  $h(t)=o(t^{-N})$ when  $t\rightarrow\infty$,  for any integer $N>0$.
  \end{lem}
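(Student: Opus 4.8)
The plan is to exploit the fact that $\phi$ vanishes identically for $s\le 1/2$, which kills the singularity of $s^{-\alpha}$ at the origin, and then to integrate by parts repeatedly against the oscillatory factor $e^{\pm its}$. Set $g(s)=\phi(s)s^{-\alpha}$. Since $\phi(s)=0$ on $[0,1/2]$ and $\phi(s)=1$ on $[1,\infty)$, the function $g$ is $C^\infty$ on $[0,\infty)$, vanishes together with all its derivatives near $s=0$, and coincides with $s^{-\alpha}$ for $s\ge 1$; in particular $g^{(j)}(s)=O(s^{-\alpha-j})$ as $s\to\infty$ for every $j\ge 0$.

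First I would record that the improper integral defining $h(t)$ converges: on $[1,\infty)$ the integrand is $s^{-\alpha}e^{\pm its}$ with $s^{-\alpha}$ decreasing monotonically to $0$, so the Dirichlet test applies. Then I would integrate by parts once, writing $e^{\pm its}=(\pm it)^{-1}\tfrac{d}{ds}e^{\pm its}$. The boundary term at $s=0$ vanishes because $g(0)=0$, and at $s=+\infty$ it vanishes because $g(s)\to 0$, which gives
\begin{equation*}
h(t)=\frac{\mp 1}{it}\int_0^\infty g'(s)\,e^{\pm its}\,ds.
\end{equation*}
The purpose of this first step is that $g'(s)=O(s^{-\alpha-1})$ is now absolutely integrable at infinity, so everything that follows involves genuinely convergent integrals. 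Iterating the integration by parts $N$ times, with all boundary terms continuing to vanish, yields
\begin{equation*}
h(t)=\left(\frac{\mp 1}{it}\right)^{\!N}\int_0^\infty g^{(N)}(s)\,e^{\pm its}\,ds.
\end{equation*}

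To finish I would verify that $g^{(N)}\in L^1([0,\infty))$: it is continuous, supported in $[1/2,\infty)$, bounded on the compact transition region $[1/2,1]$, and $O(s^{-\alpha-N})$ for large $s$, which is integrable because $\alpha+N>1$ whenever $N\ge 1$. Hence $\abs{h(t)}\le \abs{t}^{-N}\int_0^\infty\abs{g^{(N)}(s)}\,ds=C_N\abs{t}^{-N}$, giving $h(t)=O(t^{-N})$. The sharper conclusion $h(t)=o(t^{-N})$ then follows in either of two ways: one may invoke the Riemann--Lebesgue lemma on the last display, since $g^{(N)}\in L^1$ forces the integral to tend to $0$ as $\abs{t}\to\infty$, so that $t^{N}h(t)\to 0$; or, more elementarily, one applies the estimate just proved with $N+1$ in place of $N$, obtaining $\abs{h(t)}\le C_{N+1}\abs{t}^{-1}\cdot\abs{t}^{-N}$ with $\abs{t}^{-1}\to 0$.

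The only genuine subtlety, and the step I would be most careful about, is the regime $0<\alpha\le 1$, where $g$ itself is not absolutely integrable near infinity and the original integral converges only conditionally. This is precisely why the first integration by parts, which upgrades conditional to absolute convergence, must precede the iteration, and why the vanishing of the boundary terms at infinity has to be justified from $g^{(j)}(s)\to 0$ rather than from absolute integrability of $g$ itself.
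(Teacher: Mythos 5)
Your proof is correct and follows essentially the same route as the paper: integrate by parts $N$ times against $e^{\pm its}$ and bound the $L^1$ norm of $\left(\frac{d}{ds}\right)^N\big(\phi(s)s^{-\alpha}\big)$, splitting into the transition region $[1/2,1]$ and the tail $[1,\infty)$. You additionally make explicit several points the paper's one-line argument leaves implicit — the vanishing of boundary terms, the conditional-convergence issue when $0<\alpha\le 1$, and the upgrade from $O(t^{-N})$ to $o(t^{-N})$ via Riemann--Lebesgue or the $N+1$ estimate — all of which are handled correctly.
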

 \begin{proof} By using integration by parts and Leibniz rule,
 \begin{eqnarray*}
    |t^Nh(t)|&=&\left|\int_0^\infty\left(\frac{d}{ds}\right)^N\Big(\phi(s)s^{-\alpha}\Big)e^{\pm its} ds\right|  \\
   \\&\leq & C\left\{\int _{1/2}^1\sum_{\ell=0}^N  |\;\phi^{(\ell)}(s)s^{-\alpha-N+\ell}| ds
    +  \int_1^\infty  s^{-\alpha-N }  ds \right\}
    \\ &\leq & C\;.
   \end{eqnarray*}
  \end{proof}
   \begin{lem}\label{maj}
    The following   are  continuous and  bounded functions on $(0,\infty)$
   $$G(t)=\int_0^t s^{-\alpha+\gamma_k+(n-1)/2}  e^{\pm is} \; ds,\;\;
   H(t)=t^{ -\alpha+\gamma_k+(n+1)/2}\int_1^2 \psi(s)s^{-\alpha+\gamma_k+(n-1)/2}   e^{\pm its} \; ds.$$
   \end{lem}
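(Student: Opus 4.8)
The plan is to treat the two functions separately; in both cases continuity is immediate and the only real work is to control the behaviour as $t\to\infty$. Throughout I abbreviate $\theta=-\alpha+\gamma_k+(n-1)/2$ and note that the standing assumption $\alpha\in\,]\gamma_k+(n-1)/2,\gamma_k+(n+1)/2[$ forces $\theta\in(-1,0)$, hence also $\theta+1\in(0,1)$.

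For $G(t)=\int_0^t s^{\theta}e^{\pm is}\,ds$, the condition $\theta>-1$ makes the integrand locally integrable on $[0,\infty)$, so $G$ is absolutely continuous, in particular continuous on $(0,\infty)$, with $G(t)\to0$ as $t\to0^+$. For boundedness I split at $s=1$: for $t\le1$ one has $|G(t)|\le\int_0^1 s^{\theta}\,ds=(\theta+1)^{-1}$, while for $t>1$ I write $G(t)=G(1)+\int_1^t s^{\theta}e^{\pm is}\,ds$ and integrate the tail by parts using $e^{\pm is}=\frac{d}{ds}\big((\pm i)^{-1}e^{\pm is}\big)$. This produces a boundary term $\big[(\pm i)^{-1}s^{\theta}e^{\pm is}\big]_1^t$, uniformly bounded because $\theta<0$, together with a remainder proportional to $\int_1^t s^{\theta-1}e^{\pm is}\,ds$, which is absolutely convergent uniformly in $t$ since $\theta-1<-1$. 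Hence $\sup_{t>0}|G(t)|<\infty$.

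For $H(t)=t^{\theta+1}\int_1^2\psi(s)s^{\theta}e^{\pm its}\,ds$, the inner integral depends continuously (indeed smoothly) on $t$ because the interval is compact and the integrand is smooth in $t$, and $t^{\theta+1}$ is continuous on $(0,\infty)$, so $H$ is continuous; as $t\to0^+$ the factor $t^{\theta+1}\to0$ while the integral stays bounded, giving $H(t)\to0$. For $t\to\infty$ a single integration by parts in $s$ shows $\int_1^2\psi(s)s^{\theta}e^{\pm its}\,ds=O(t^{-1})$: at $s=2$ the boundary term vanishes since $\psi$ and all its derivatives vanish there, at $s=1$ a boundary contribution of size $\psi(1)\,t^{-1}$ survives, and the integral of the derivative $(\psi(s)s^{\theta})'$ is likewise $O(t^{-1})$ as that integrand is bounded on $[1,2]$. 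Consequently $|H(t)|\le C\,t^{\theta+1}t^{-1}=C\,t^{\theta}\to0$, so $H$ is bounded.

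I expect the genuine obstacle to be the behaviour of $G$ at infinity: the naive majorant $\int^{\infty}s^{\theta}\,ds$ diverges when $\theta>-1$, so absolute bounds fail and one must exploit the oscillation through integration by parts, after which the power drops to $\theta-1<-1$ and the remainder becomes absolutely (and uniformly) integrable. The analysis of $H$ is comparatively soft and parallels the idea of Lemma \ref{l2}, except that here the integral runs only over $[1,2]$, so the left endpoint $s=1$ leaves a surviving boundary term and one obtains only $O(t^{-1})$ decay; this single power is nonetheless exactly enough to dominate the prefactor $t^{\theta+1}$, and it is precisely here that the lower bound $\alpha>\gamma_k+(n-1)/2$ (equivalently $\theta<0$) is used.
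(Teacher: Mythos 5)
Your proof is correct and takes essentially the same route as the paper: boundedness of $G$ follows from convergence of the oscillatory integral at infinity (which the paper simply asserts and you justify by an explicit integration by parts), and boundedness of $H$ from a single integration by parts yielding the $O(t^{-1})$ bound that dominates the prefactor $t^{-\alpha+\gamma_k+(n+1)/2}$, exactly as in the paper. Your write-up merely fills in details the paper leaves implicit (the standing range $\alpha\in\,]\gamma_k+(n-1)/2,\gamma_k+(n+1)/2[$, the behaviour at $t\to0^+$, and the boundary terms at $s=1,2$), so no further changes are needed.
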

 \begin{proof}
 The continuous of $G$ and $H$ is  obvious. Since the integral defining $G$ is convergent then the  function G has a finite limit at $\infty $,
   from which  and    continuity   we obtain   the boundedness of $G$. Similarly for the boundedness of $H$, since by integration by parts  we have that
   $$\left|\int_1^2\psi(s) s^{-\alpha+\gamma_k+(n-1)/2} \; e^{\pm its} \; ds\right|\leq Ct^{-1},$$
  and then  $\lim_{t\rightarrow\infty} H(t)=0$.
  \end{proof}
 \par
  Let us now  consider the asymptotic behavior of   $K_\alpha$.
\begin{lem} The kernel $ K_\alpha$ has the following properties:
\begin{itemize}
  \item [(i)] $K_\alpha(x)=O(|x|^{-N}), \qquad as \; |x|\rightarrow\infty$, for big $N$,
  \item [(ii )]  $K_\alpha(x)=O\Big((1-|x|)^{\alpha- \gamma_k- (n-1)/2-1 }\Big)  , \qquad as  \; |x|\rightarrow 1$,
   \item [(iii )] $K_\alpha(x)$ is bounded near the origin,
  \item [(iv)]$K_\alpha$ is an integrable function.
\end{itemize}
\end{lem}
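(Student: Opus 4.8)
The plan is to read off the size of $K_\alpha$ in three separate regimes --- $|x|\to\infty$, $|x|\to1$ and $|x|\to0$ --- and then to combine the three pointwise estimates into the integrability statement (iv). Throughout I set $\mu=\alpha-\gamma_k-(n-1)/2$, which lies in $(0,1)$ by the standing assumption $\gamma_k+(n-1)/2<\alpha<\gamma_k+(n+1)/2$; note that the exponent appearing in (ii) is exactly $\mu-1\in(-1,0)$. The two tools are the oscillatory Bessel integral defining $K_\alpha$ and its expanded form (\ref{DK}), obtained from the asymptotics (\ref{bvB})--(\ref{RN}): each term of (\ref{DK}) is a power $|x|^{-\gamma_k-(n-1)/2-\ell}$ times an integral $\int_0^\infty\widetilde\phi(s)\,s^{-\mu-\ell}e^{i(1\pm|x|)s}\,ds$, plus the remainder controlled by (\ref{RN}).

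For (i) I would use (\ref{DK}). As $|x|\to\infty$ both phases $1+|x|$ and $1-|x|$ tend to $\pm\infty$ in absolute value, so Lemma \ref{l2} (applied with parameter $\mu+\ell>0$) makes each of these integrals $o\bigl((1\pm|x|)^{-M}\bigr)$ for every $M$; multiplying by the polynomial prefactors and estimating the remainder term through (\ref{RN}) --- legitimate because $|x|s\gtrsim|x|$ is large on $\supp\widetilde\phi$ --- yields $K_\alpha(x)=O(|x|^{-N})$ for arbitrarily large $N$. For (ii) I again use (\ref{DK}), now with $t:=1-|x|\to0$. The only summand that is not bounded is the $\ell=0$ term carrying the phase $1-|x|$, namely $|x|^{-\gamma_k-(n-1)/2}a_0'\int_0^\infty\widetilde\phi(s)s^{-\mu}e^{its}\,ds$: after the substitution $u=ts$ and the splitting $\widetilde\phi(u/t)=1-(1-\widetilde\phi(u/t))$ this integral equals $t^{\mu-1}$ times the finite limit $\lim_{T\to\infty}G(T)$ plus an error of size $O(t^{1-\mu})$, where $G$ is the bounded function of Lemma \ref{maj} (the sign of $t$ only switches $e^{is}$ with $e^{-is}$, both covered). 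Hence this term is $O(t^{\mu-1})$, while the $\ell\ge1$ summands are absolutely convergent (as $\mu+\ell>1$) hence bounded, the phase $1+|x|$ terms stay away from the resonance, and the remainder is bounded; this gives $K_\alpha(x)=O\bigl((1-|x|)^{\mu-1}\bigr)$.

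The delicate point, and the main obstacle, is (iii): near the origin the prefactors $|x|^{-\gamma_k-(n-1)/2-\ell}$ in (\ref{DK}) blow up, so that representation is useless and one must work directly with the original Bessel integral $K_\alpha=\sum_\nu K_\alpha^\nu$. For fixed small $|x|$ I would split the dyadic sum at $2^\nu\approx1/|x|$. When $2^\nu\lesssim1/|x|$ the argument $|x|s\lesssim1$ keeps $\mathcal{J}_{\gamma_k+n/2-1}(|x|s)$ and all its $s$-derivatives bounded, the phase is essentially $e^{is}$ with derivative $1$, and repeated integration by parts (the mechanism behind Lemma \ref{l2}) gives $|K_\alpha^\nu(x)|\lesssim 2^{\nu(-\alpha+2\gamma_k+n-M)}$, which sums to $O(1)$ once $M$ is large. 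When $2^\nu\gtrsim1/|x|$ I insert the asymptotics (\ref{bvB}); rescaling $s=2^\nu\sigma$ the combined phase becomes $2^\nu(1\pm|x|)\sigma$, still non-stationary since $1\pm|x|\approx1$, so integration by parts again produces rapid decay in $2^\nu$ that beats the growth from the factor $(|x|s)^{-\gamma_k-(n-1)/2}$; summing the resulting geometric series from $2^\nu\approx1/|x|$ yields a bound of the form $|x|^{M-c}$ with $c$ fixed, which tends to $0$ as $M\to\infty$. The elementary estimates of Lemma \ref{maj} package the borderline dyadic integrals conveniently here, and the remainder $R_N$ is handled identically after differentiating its asymptotic termwise; choosing $N$ and the number $M$ of integrations by parts large enough closes the estimate and shows that $K_\alpha$ is bounded near $0$.

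Finally (iv) is a bookkeeping consequence of (i)--(iii) together with the continuity of $K_\alpha$ on $(0,\infty)\setminus\{1\}$. In polar coordinates the measure $w_k(x)\,dx$ contributes the factor $r^{2\gamma_k+n-1}\,dr$: the bound (iii) makes $K_\alpha$ integrable near $0$, the bound (ii) is integrable across the sphere $r=1$ precisely because $\mu-1>-1$, and the rapid decay (i) dominates $r^{2\gamma_k+n-1}$ at infinity. Hence $K_\alpha\in L^1(\mathbb{R}^n,w_k(x)\,dx)$.
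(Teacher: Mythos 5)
Your proof is correct, and for (i), (ii) and (iv) it is essentially the paper's own argument: (i) from (\ref{DK}), Lemma \ref{l2} and (\ref{RN}); (ii) by isolating the $\ell=0$ term carrying the phase $1-|x|$ and scaling out the factor $|1-|x||^{\alpha-\gamma_k-(n-1)/2-1}$ (the paper splits $\widetilde{\phi}=1+(\widetilde{\phi}-1)$ and then rescales, you rescale and then split --- the same computation, with the bounded error absorbed into Lemma \ref{maj}); and (iv) is the same polar-coordinate bookkeeping. The genuine difference is (iii). The paper never splits the dyadic sum at $2^{\nu}\approx 1/|x|$: it inserts the Poisson integral representation of the Bessel function into $K_\alpha^\nu$ (formula (\ref{12})), so that the whole integrand acquires the explicit phase $e^{i(1+t|x|)s}$, whose derivative $1+t|x|\geq 1/2$ is uniformly non-stationary for $|x|\leq 1/2$ and $t\in[-1,1]$; $N$ integrations by parts in $s$ then yield the single estimate (\ref{bnu}), uniform in $|x|$ and summable in $\nu$. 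That route treats all frequencies at once and, in particular, never uses the expansion (\ref{bvB}) in the region where $|x|s$ is small, so no derivative information on the remainder $R_N$ is ever needed. Your two-regime argument is a valid, more generic alternative (it also avoids the Poisson formula, hence works without the restriction $2\gamma_k+n>1$ that representation requires), but it genuinely needs the extra fact you flagged: to integrate by parts on the remainder term in the regime $2^\nu\gtrsim 1/|x|$ you must use $|\partial_s^j R_N(s)|\leq C_j |s|^{-N-\gamma_k-(n-1)/2-j}$, since with the size bound (\ref{RN}) alone that contribution only sums to $O\bigl(|x|^{\alpha-2\gamma_k-n}\bigr)$, which blows up at the origin. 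Such termwise-differentiated Bessel asymptotics are standard (the paper itself invokes them later as (\ref{R})), so your proof closes, at the price of a longer case analysis than the paper's one-stroke argument.
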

\begin{proof}
Clearly (iv) is a consequence of (i), (ii) and (iii). The proof of (i) follows from  (\ref{DK}), Lemma \ref{l2} and  (\ref{RN}).
  Consider (\ref{DK}) we  see   that  except  the term corresponding to $\ell=0$  all terms are bounded near $|x|= 1$.  On the other hand, writing

  \begin{eqnarray*}
  \int_{0}^\infty  \widetilde{\phi}(s) s^{-\alpha+ \gamma_k+ (n-1)/2}e^{i(1\pm|x|)s}ds&=&\int_{0}^\infty   s^{-\alpha+ \gamma_k+ (n-1)/2  }e^{i(1\pm|x|)s}ds\\&&+
\int_{0}^\infty  (\widetilde{\phi}(s)-1) s^{-\alpha+ \gamma_k+ (n-1)/2   }e^{i(1\pm|x|)s}ds
\\&=& (1\pm |x|)^{\alpha- \gamma_k- (n-1)/2-1}\int_{0}^\infty   s^{-\alpha+ \gamma_k+ (n-1)/2  }e^{i s}ds\\&&+
\int_{0}^\infty  (\widetilde{\phi}(s)-1) s^{-\alpha+ \gamma_k+ (n-1)/2   }e^{i(1\pm|x|)s}ds\\
 &=& C (1\pm |x|)^{\alpha- \gamma_k- (n-1)/2-1}+ H(x),
\end{eqnarray*}
where  $H$  is bounded. This gives the property (ii). To prove  (iii) we can use the well known integral representation  for Bessel function to write
\begin{eqnarray}\label{12}\nonumber
K_\alpha^\nu(x) &=& d_k\int_{0}^\infty \psi(2^{-\nu}s)  s ^{-\alpha+2\gamma_k+n-1}e^{is} \mathcal{J}_{\gamma_k+n/2-1}(|x|s)ds
  \\&=&b_k\int_{-1}^1(1-t^2)^{\gamma_k+(n-3)/2 }\left\{\int_{0}^\infty \psi(2^{-\nu}s)  s ^{-\alpha+2\gamma_k+n-1}
   e^{i(1+t|x|)s} \;ds \right\}\; dt,\nonumber \\&&
\end{eqnarray}
where $b_k=d_k \Gamma(\gamma_k+n/2)/\Gamma(\gamma_k+(n+1)/2-1)$. Let $|x|\leq 1/2 $ and  $N>-\alpha+2\gamma_k+n$, sufficiently large.
Using the fact that
$$   \left|\left(\frac{d}{ds}\right)^\ell (\psi (2^{-\nu}s) )\right|\leq C |s|^{-\ell},\quad \ell\in \mathds{N}$$
and applying   integration by parts $N$ times, to derive that
\begin{eqnarray*}
&& \int_{0}^\infty \psi(2^{-\nu}s)  s ^{-\alpha+2\gamma_k+n-1}e^{i(1+t|x|)s}\\&&= (i(1+t|x|))^{-N}
  \int_0^\infty\left(\frac{d}{ds}\right)^N\Big( \psi(2^{-\nu}s)  s ^{-\alpha+2\gamma_k+n-1}\Big)e^{i(1+t|x|)s} ds
     \\&&\leq  C2^{-N}\int _{2^{\nu-1}}^{2^{\nu+1}} s^{-\alpha+2\gamma_k+n-1-N }  ds.
   \end{eqnarray*}
From which and (\ref{12}) it follows that
  \begin{eqnarray}\label{bnu}
&&|K_\alpha^\nu(x)  |  \leq  C \int _{2^{\nu-1}}^{2^{\nu+1}} s^{-\alpha+2\gamma_k+n-1-N }  ds
   \end{eqnarray}
   and then  $|K_\alpha (x) |  \leq  C$.  This concludes   (iii).
   \end{proof}
\begin{lem}\label{rep}  Let $\alpha\in\;]\gamma_k+(n-1)/2, \gamma_k+(n+1)/2[$.
The operator $T_{m\alpha}$ can be represented  through the kernel $K_\alpha$ as the integral,
\begin{equation}\label{integrep}
   T_{m_\alpha}(f)(x)=\int_{\mathbb{R}^n} K_\alpha(y) \tau_x(f)(y)w_k(y)dy, \qquad f\in S(\mathbb{R}^n)
\end{equation}
and satisfies the inequality
\begin{equation}\label{L1}
  \|T_{m_\alpha}(f)\|_{1,k}\leq \| K_\alpha\|_{1,k}\|f\|_{1,k}.
\end{equation}
 \end{lem}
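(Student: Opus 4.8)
The plan is to identify $T_{m_\alpha}$ with Dunkl convolution against the kernel $K_\alpha$ and then read off both the integral representation and the norm estimate from that identification. The preceding lemma shows that $K_\alpha=\mathcal F_k^{-1}(m_\alpha)$ is a radial function lying in $L^1(\mathbb R^n,w_k(x)dx)$, so for $f\in\mathcal S(\mathbb R^n)$ the convolution $f*_kK_\alpha$ is well defined. Using the convolution identity (\ref{conv}) together with $\mathcal F_k(K_\alpha)=m_\alpha$, I would compute $\mathcal F_k(f*_kK_\alpha)=\mathcal F_k(f)\,\mathcal F_k(K_\alpha)=m_\alpha\,\mathcal F_k(f)=\mathcal F_k(T_{m_\alpha}f)$. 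Since the Dunkl transform is injective by Plancherel's theorem, this forces $T_{m_\alpha}(f)=f*_kK_\alpha=K_\alpha*_kf$.

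The second step is to unfold the convolution into the stated kernel form. Writing out the definition of $*_k$ gives $f*_kK_\alpha(x)=\int_{\mathbb R^n}\tau_x(f)(-y)\,K_\alpha(y)\,w_k(y)\,dy$. Because $K_\alpha$ is radial one has $K_\alpha(-y)=K_\alpha(y)$, and the weight satisfies $w_k(-y)=w_k(y)$, so the change of variable $y\mapsto-y$ (Jacobian one) turns this into $\int_{\mathbb R^n}K_\alpha(y)\,\tau_x(f)(y)\,w_k(y)\,dy$, which is exactly (\ref{integrep}). The only point to check is absolute convergence, which holds since $\tau_x(f)$ is bounded for fixed $x$ and $K_\alpha\in L^1(\mathbb R^n,w_k(x)dx)$.

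For the inequality (\ref{L1}) I would not invoke Young's inequality (\ref{Y}) directly, because that statement requires a \emph{bounded} radial kernel whereas $K_\alpha$ carries the mild, integrable singularity $(1-|x|)^{\alpha-\gamma_k-(n-1)/2-1}$ near $|x|=1$ found in the previous lemma. Instead I would start from the commuted form $T_{m_\alpha}(f)(x)=\int_{\mathbb R^n}\tau_x(K_\alpha)(-y)\,f(y)\,w_k(y)\,dy$, which places the translation on the radial kernel, take absolute values, and apply Tonelli to obtain $\|T_{m_\alpha}(f)\|_{1,k}\le\int_{\mathbb R^n}|f(y)|\big(\int_{\mathbb R^n}|\tau_x(K_\alpha)(-y)|\,w_k(x)\,dx\big)w_k(y)\,dy$. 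Using the symmetry $\tau_x g(z)=\tau_z g(x)$ valid for radial $g$ (with $z=-y$) together with the evenness of $w_k$, the inner integral equals $\|\tau_{-y}(K_\alpha)\|_{1,k}$, which is at most $\|K_\alpha\|_{1,k}$ by the contraction (\ref{trp}) (applicable since $K_\alpha$ is a radial $L^1$ function). This yields $\|T_{m_\alpha}(f)\|_{1,k}\le\|K_\alpha\|_{1,k}\,\|f\|_{1,k}$, as claimed.

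The main obstacle I anticipate is the bookkeeping with the translation operator: justifying the convolution identity on $\mathcal S(\mathbb R^n)$, validating the symmetry and commutativity relations for $\tau_x$ acting on the radial kernel $K_\alpha$, and confirming the Tonelli interchange despite the singularity of $K_\alpha$ at $|x|=1$. Once the contraction (\ref{trp}) is legitimately applied to $K_\alpha$, the estimate itself is immediate; the delicate part is purely the manipulation of $\tau_x$ and the equivalence of the two convolution forms, namely (\ref{integrep}) and the commuted one used for the $L^1$ bound.
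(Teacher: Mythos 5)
Your argument for the representation formula (\ref{integrep}) assumes exactly what the lemma has to prove. The preceding lemma does \emph{not} show that $K_\alpha=\mathcal{F}_k^{-1}(m_\alpha)$: it defines $K_\alpha$ as the pointwise sum $\sum_\nu K_\alpha^\nu$ of the inverse transforms of the dyadic pieces $m_\alpha^\nu$ (convergent for $|x|\neq 0,1$) and establishes its size, decay and integrability; the identification of its Dunkl transform with $m_\alpha$ — equivalently, the fact that $T_{m_\alpha}$ is convolution against $K_\alpha$ — is precisely the content of (\ref{integrep}). Nor can you get it from (\ref{conv}) plus injectivity: the paper's convolution toolbox ((\ref{conv}), (\ref{Y}) and the $L^p$ mapping property) is stated for \emph{bounded} radial $L^1$ kernels — a restriction you yourself invoke in your third paragraph as the reason for avoiding (\ref{Y}) — while $K_\alpha$ blows up like $(1-|x|)^{\alpha-\gamma_k-(n+1)/2}$ at the unit sphere; and for $\alpha\leq\gamma_k+n/2$ neither $m_\alpha$ nor $K_\alpha$ belongs to $L^2(\mathbb{R}^n,w_k(x)dx)$, so no Plancherel-type argument identifies $\mathcal{F}_k(K_\alpha)$ with $m_\alpha$ for free. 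The unavoidable step is to justify the interchange
$$\sum_{\nu=0}^\infty\int_{\mathbb{R}^n}K_\alpha^\nu(y)\,\tau_x(f)(y)\,w_k(y)\,dy=\int_{\mathbb{R}^n}K_\alpha(y)\,\tau_x(f)(y)\,w_k(y)\,dy,$$
and this is where the entire technical weight of the paper's proof lies: the integral is split into $|y|\leq1/2$, $1/2\leq|y|\leq2$ and $|y|\geq2$; on the first and third regions the summability of the bounds (\ref{bnu}) and of the large-$|y|$ estimate is used, and on the middle region one needs the uniform bound (\ref{14}) on the \emph{partial sums} $S_\nu$, namely $|S_\nu(y)|\leq C|1-|y||^{\alpha-\gamma_k-(n+1)/2}$, proved via Lemma \ref{maj}; this dominant is integrable precisely because $\alpha>\gamma_k+(n-1)/2$, and dominated convergence then gives the interchange. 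Pointwise convergence of $\sum_\nu K_\alpha^\nu$, which is all the preceding material supplies, does not permit it; your proposal contains no substitute for this domination.

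Your treatment of (\ref{L1}), by contrast, is essentially sound and genuinely different from the paper's: the paper truncates, setting $K_{\alpha,\varepsilon}=K_\alpha\,\mathds{1}_{\{\varepsilon\leq||y|-1|\leq1/\varepsilon\}}$ so that the bounded-kernel inequalities (\ref{Y}) and (\ref{trp}) apply verbatim, and then recovers the full estimate by Fatou's lemma, whereas you apply Tonelli together with the radial symmetry $\tau_x(K_\alpha)(-y)=\tau_{-y}(K_\alpha)(x)$ and the contraction (\ref{trp}) directly to the singular kernel. That route works, but note that the commuted form and the symmetry relation for the \emph{unbounded} kernel are exactly what the paper's truncation is designed to legitimize; as written you apply them outside the stated framework, which is a minor, fixable gap — unlike the missing domination argument for (\ref{integrep}), which is the heart of the lemma.
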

\begin{proof} Let $f\in S(\mathbb{R}^n)$. In view of (\ref{malpha}) we can
  write
$$T_{m_{\alpha}} =\sum_{\nu=0}^\infty  T_{m_\alpha^\nu}\:.$$
Each of     $T_{m_\alpha^\nu}$ will   be written in
its  integral form
\begin{eqnarray*}
T_{m_\alpha^\nu}(f)(x)= K_\alpha^\nu*_k f(x)=  \int_{\mathbb{R}^n}  K_\alpha^\nu(y) \tau_x(f)(-y)w_k(y)dy.
\end{eqnarray*}
 Hence to establish (\ref{integrep}) we only need to interchange the order of integration and summation.   For this purpose we split the integral
\begin{eqnarray*}
 &&\int_{\mathbb{R}^n}  K_\alpha^\nu(y) \tau_x(f)(y)w_k(y)dy =\int_{ |y|\leq 1/2}  K_\alpha^\nu(y) \tau_x(f)(y)w_k(y)dy\\&&\qquad\qquad+
 \int_{1/2\leq|y|\leq 2} K_\alpha^\nu(y) \tau_x(f)(y)w_k(y)dy  +\int_{|y|\geq 2}  K_\alpha^\nu(y) \tau_x(f)(y)w_k(y)dy.
\end{eqnarray*}
From the estimate (\ref{bnu}) we have
\begin{eqnarray*}
\sum_{\nu=0}^\infty \int_{ |y|\leq 1/2}  K_\alpha^\nu(y) \tau_x(f)(y)w_k(y)dy=\int_{ |y|\leq 1/2}  K_\alpha(y) \tau_x(f)(y)w_k(y)dy.
\end{eqnarray*}
Similarly for the  integral over $|y|\geq 2$, by writing
\begin{eqnarray*}
 &&K_\alpha^\nu(y) =\sum_{\ell=0}^{N-1}
  |y|^{-\gamma_k-(n-1)/2-\ell  }\Bigg\{c_\ell \; \int_{0}^\infty  \psi(2^{-\nu}s) s^{-\alpha+ \gamma_k+ (n-1)/2-\ell } e^{i(1+|y|)s}ds\\&&+
  c_\ell' \; \int_{0}^\infty \psi(2^{-\nu}s) s^{-\alpha+ \gamma_k+ (n-1)/2-\ell} e^{i(1-|y|)s}ds\Bigg\}+
  \int_{0}^\infty  \psi(2^{-\nu}s) s ^{-\alpha+2\gamma_k+n-1}R_N(|y|s)\;e^{is} \;ds,
  \end{eqnarray*}
for  $N$ sufficiently large, we get that
\begin{eqnarray*}
 &&|K_\alpha^\nu(y)|\leq C \int_{2^{\nu-1}}^{2\nu+1}  s ^{-\alpha+ \gamma_k+(n-1)/2-1}  \;ds.
  \end{eqnarray*}
Notice  that we have   integrate by parts   the integral of first  term ( for $\ell=0$ ). Thus,
\begin{eqnarray*}
\sum_{\nu=0}^\infty \int_{ |y|\geq2} K_\alpha^\nu (y) \tau_x(f)(y)w_k(y)dy =\int_{ |y|\geq 2}  K_\alpha(y) \tau_x(f)(y)w_k(y)dy.
\end{eqnarray*}
Now for the integral over  $1/2\leq|y|\leq 2$ we proceed by applying the dominated  convergence theorem. Put
  $$ \Psi_\nu(s)=\sum_{j=0}^{\nu}\psi(2^{-j}s), \qquad S_\alpha^\nu(y)=\sum_{j=0}^{\nu} K_\alpha^\nu(y).$$
First, observe that $\Psi_\nu(s)=1$ if $1\leq s\leq 2^\nu$ and  $\Psi_\nu(s)=0$ if $s\geq 2^{\nu+1}$ or $s\leq 1/2$.
Using this fact, we have
 \begin{eqnarray*}
 && \int_{0}^\infty  \Psi_\nu(s) s^{-\alpha+ \gamma_k+ (n-1)/2}e^{i(1\pm|y|)s}ds
 \\&& \qquad\qquad =  \int_{0}^1     \psi(s)s^{-\alpha+ \gamma_k+ (n-1)/2  }e^{i(1\pm|y|)s}ds +
\int_{1}^{2^\nu}  s^{-\alpha+ \gamma_k+ (n-1)/2   }e^{i(1\pm|y|)s}ds
\\&& \qquad\qquad  +\int_{2^\nu}^{2^{\nu+1}} \psi(2^{-\nu}s)s^{-\alpha+ \gamma_k+ (n-1)/2   }e^{i(1\pm|y|)s}ds.
\end{eqnarray*}
Clearly
  $$\left| \int_{0}^1   \psi(s)s^{-\alpha+ \gamma_k+ (n-1)/2  }e^{i(1\pm|y|)s}ds \right|\leq  \int_{0}^1     \psi(s)s^{-\alpha+ \gamma_k+ (n-1)/2  } ds.
   \leq C$$
  However,  Lemma \ref{maj} implies that
  \begin{eqnarray*}
   \left|\int_{1}^{2^\nu}  s^{-\alpha+ \gamma_k+ (n-1)/2   }e^{i(1\pm|y|)s}ds\right|&=&
   |1\pm|y||^{\alpha- \gamma_k -(n+1)/2}|\left|\int_{|1\pm|y||}^{2^\nu|1\pm|y||}  s^{-\alpha+ \gamma_k+ (n-1)/2   }e^{\pm is}ds\right|
   \\&\leq& C\; |1\pm|y||^{\alpha- \gamma_k -(n+1)/2}
   \end{eqnarray*}
and
 \begin{eqnarray*}
   \left|\int_{2^\nu}^{2^{\nu+1}} \psi(2^{-\nu}s)s^{-\alpha+ \gamma_k+ (n-1)/2   }e^{i(1\pm|y|)s}ds.\right|&=&
   |1\pm|y||^{\alpha- \gamma_k -(n+1)/2} H\Big((1\pm|y|)2^{\nu}\Big)
  \\& \leq &C\; |1\pm|y||^{\alpha- \gamma_k -(n+1)/2}.
   \end{eqnarray*}
  Now writing
\begin{eqnarray*}
&&S_\nu(y) =\sum_{p=0}^{N-1}
  |y|^{-\gamma_k-(n-1)/2-p  }\Bigg\{c_p \; \int_{0}^\infty \Psi_\nu( s) s^{-\alpha+ \gamma_k+ (n-1)/2-p } e^{i(1+|y|)s}ds\\&&+
  c_p' \; \int_{0}^\infty\Psi_\nu( s) s^{-\alpha+ \gamma_k+ (n-1)/2-p } e^{i(1-|y|)s}ds\Bigg\}+
  \int_{0}^\infty  \Psi_\nu( s) s ^{-\alpha+2\gamma_k+n-1}R_N(|y|s)\;e^{is} \;ds.
  \end{eqnarray*}
So, as  $1/2\leq|y|\leq 2$ we get that
 \begin{equation}\label{14}
    |S_\nu(y)|\leq C\; |1-|y||^{\alpha- \gamma_k -(n+1)/2}.
\end{equation}
Since the right hand side in (\ref{14}) is  an integrable function then by   the dominated convergence theorem,
 \begin{eqnarray*}
\sum_{\nu=0}^\infty \int_{1/2\leq |y|\leq 2} K_\alpha^\nu(y) \tau_x(f)(y)w_k(y)dy =\int_{ 1/2\leq |y|\leq 2}  K_\alpha(y) \tau_x(f)(y)w_k(y)dy.
\end{eqnarray*}
 This completes the proof of   (\ref{integrep}).
 \par To  prove  (\ref{L1}), we consider the truncated kernels and operators, for $0<\varepsilon<1$  $$K_{\alpha,\varepsilon}=K_\alpha \;\mathds{1} _{\{\varepsilon\leq ||y|-1|\leq \frac{1}{\varepsilon}\}} $$
 and for $f\in S(\mathbb{R}^n)$,
\begin{eqnarray*}
T_{m_\alpha,\varepsilon}f(x)=\int_{\mathbb{R}^n}K_{\alpha,\varepsilon}(y) \tau_x(f)(y)w_k(y)dy =\int_{\mathbb{R}^n} \tau_x(K_{\alpha,\varepsilon})(y) f(y)w_k(y)dy.
\end{eqnarray*}
Since the kernel $K_{\alpha }$ is in $L^1(\mathbb{R}^n,w_k(y)dy)$ and bounded a way from $|y|=1$ which imply that  $K_{\alpha,\varepsilon}$ is a bounded radial function in $L^1(\mathbb{R}^n,w_k(y)dy)$, we can then use (\ref{Y}) and  (\ref{trp}) to get the following
 $$ \|T_{m_\alpha,\varepsilon}(f)\|_{1,k}\leq \|\tau_x(K_{\alpha,\varepsilon})\|_{1,k}\|f\|_{1,k}\leq \|K_{\alpha,\varepsilon}\|_{1,k}\|f\|_{1,k}\leq
  \|K_{\alpha} \|_{1,k}\|f\|_{1,k}.$$
As $\lim_{\varepsilon\rightarrow \infty} T_{m_\alpha,\varepsilon}(f)(x)= T_{m_\alpha}(f)(x)$, then we can  apply Fatou's lemma to obtain (\ref{L1}).
\end{proof}
Now, we are in a position to prove Theorem \ref{th1}.
\begin{proof}[Proof of Theorem \ref{th1} ] We can assume that $1\leq p\leq 2$, the case $2\leq p\leq \infty$ follows by a well known duality argument.
 Let's choose an arbitrary $\alpha_0 \in\;]\gamma_k+(n-1)/2, \gamma_k+(n+1)/2[$. We first write
 \begin{eqnarray*}
  m_\alpha^\nu(\xi)&=& \Big\{\psi(2^{-\nu}|\xi|) |2^{-\nu}\xi|^{ \alpha_0-\alpha}\Big\} \Big\{ e^{i|\xi|} |\xi|^{-\alpha_0} \phi(|\xi| )\Big\}
  \\&=&2^{\nu(\alpha_0-\alpha)}h(2^{-\nu}\xi)  m_{\alpha_0}(\xi)
 \end{eqnarray*}
where $ h(\xi)= \psi( |\xi|) |\xi|^{\alpha_0-\alpha}$. The fact  that $h$ is a  radial $C^\infty$-function with compact support, the corresponding multiplier operator $T_h$    is  the convolution operator with the  radial Schwartz function $\mathcal{F}_k^{-1}(h)$ and   therefore
  bounded on   $L^r(\mathbb{R}^n, w_k(x)dx)$  for   all  $1\leq r\leq\infty$. This follows from (\ref{Y}). Also,
a simple   argument shows that   the multiplier    $T_{h_\nu}$ of symbol   $ h_{\nu}(\xi)=h( 2^{-\nu} \xi)$
is bounded on $L^r(\mathbb{R}^n, w_k(x)dx)$ with norm $\|T_{h_\nu}\|_r=\|T_{h }\|_r$.
Now, using Lemma \ref{L1} and that $T_{m_\alpha^\nu}$ is  a composition of two  multiplier operators on $L^1(\mathbb{R}^n, w_k(x)dx)$, it follows that  $T_{m_\alpha^\nu}$ is bounded on $L^1(\mathbb{R}^n, w_k(x)dx)$ with norm
\begin{eqnarray*}
 \|T_{m_\alpha^\nu}\|_1\leq C 2^{\nu(\alpha_0-\alpha)}.
\end{eqnarray*}
\par Similarly if we write
$ m_\alpha^\nu(\xi)= 2^{-\alpha\nu } \Big\{\psi(2^{-\nu}|\xi|) |2^{-\nu}\xi|^{- \alpha}\Big\} \Big\{ e^{i|\xi|}  \phi(|\xi| )\Big\}$ then we get that
\begin{eqnarray*}
 \|T_{m_\nu}\|_2\leq C 2^{-\alpha\nu }.
\end{eqnarray*}
 Therefore, by the Riesz-Thorin interpolation theorem, $T_{m_\alpha^\nu}$ is bounded  on   $L^p(\mathbb{R}^n, w_k(x)dx)$  for any $1\leq p\leq 2$,
$1/p=1-\theta/2$, $\theta\in [0,1]$, and we have that
\begin{eqnarray}\label{Tmp}
 \|T_{m_\alpha^\nu}\|_p\leq C 2^{\nu(\alpha_0-\alpha)(1-\theta)-\alpha\nu\theta} = C 2^{\nu[\alpha_0  (1-\theta)-  \alpha]}.
\end{eqnarray}
We can  then   sum and   obtain the boundedness of   $T_{m_\alpha}$   on   $L^p(\mathbb{R}^n, w_k(x)dx)$   if $\alpha>\alpha_0  (1-\theta)$.
Since,   $\alpha_0$ is arbitrary  in
$ ]\gamma_k+(n-1)/2,\; \gamma_k+(n+1)/2[$, it suffices to take
$$\alpha>  \gamma_k+(n-1)/2 (1-\theta)=(2\gamma_k+(n-1))\left(\frac{1}{p}-\frac{1}{2}\right) .$$
Our theorem is therefore proved.
\end{proof}
\subsection{Proof of Theorem \ref{th3}}
 We will use the following technical Lemma
 \begin{lem}\label{l5}
 Suppose that $F$  is \;$C^1(I)$, for  an interval $I$.
Then for each  $0<\lambda\leq |I|$  and $p,p' >1$ with  $1/p+1/p'=1$, we have
\begin{equation}\label{pp'}
  \sup_{I}|F(t)|\leq  \lambda^{-1/p }\left(\int_I|F(t)|^pdt\right)^{1/p}+\lambda^{1/p'}\left(\int_I|F'(t)|^pdt\right)^{1/p}
\end{equation}
 \end{lem}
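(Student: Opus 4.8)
The plan is to derive (\ref{pp'}) from the fundamental theorem of calculus combined with an averaging trick, which is the standard route to one--dimensional interpolation inequalities of Sobolev type. Fix an arbitrary point $t_0\in I$; it suffices to bound $|F(t_0)|$ by the right--hand side of (\ref{pp'}) uniformly in $t_0$ and then pass to the supremum over $t_0$. Since $0<\lambda\leq|I|$, I can choose a subinterval $J\subseteq I$ with $|J|=\lambda$ that contains $t_0$ (placing $J$ flush against an endpoint of $I$ if $t_0$ lies near the boundary).

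For any $s\in J$ the fundamental theorem of calculus gives $F(t_0)=F(s)+\int_s^{t_0}F'(r)\,dr$, so that $|F(t_0)|\leq|F(s)|+\int_J|F'(r)|\,dr$. Averaging this inequality over $s\in J$, that is, integrating in $s$ over $J$ and dividing by $\lambda=|J|$, yields
$$
|F(t_0)|\leq\frac{1}{\lambda}\int_J|F(s)|\,ds+\int_J|F'(r)|\,dr.
$$
I then apply H\"older's inequality with exponents $p,p'$ to each term. For the first term, $\int_J|F|\leq\lambda^{1/p'}\big(\int_J|F|^p\big)^{1/p}$, whence $\tfrac{1}{\lambda}\int_J|F|\leq\lambda^{1/p'-1}\big(\int_J|F|^p\big)^{1/p}=\lambda^{-1/p}\big(\int_J|F|^p\big)^{1/p}$, using that $1/p'-1=-1/p$. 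For the second term, $\int_J|F'|\leq\lambda^{1/p'}\big(\int_J|F'|^p\big)^{1/p}$. Enlarging the domain of integration from $J$ to $I$ (both integrands being nonnegative) and taking the supremum over $t_0\in I$ produces exactly (\ref{pp'}).

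I do not anticipate any genuine obstacle here: the one point that deserves care is that the auxiliary interval $J$ of length exactly $\lambda$ must be fitted inside $I$ while still containing $t_0$, which is precisely what the hypothesis $\lambda\leq|I|$ guarantees. If $I$ is closed and bounded, $\sup_I|F|$ is attained and one may simply take $t_0$ to be a maximizer; in the general case the bound above is uniform in $t_0$, so the supremum step is harmless and the argument is identical.
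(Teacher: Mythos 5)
Your proof is correct and follows essentially the same route as the paper's: the fundamental theorem of calculus on a subinterval of length $\lambda$, averaging over that subinterval, and two applications of H\"older's inequality. If anything, your version is slightly more careful than the paper's, since you make explicit that the auxiliary interval $J$ must contain the point $t_0$ and you keep the integration domains straight before enlarging them to $I$ at the end.
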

\begin{proof}
Let $I_0\subset I$ be an interval of length $\lambda$. For $t,s\in I_0$ we have
$$F(t)=F(s)+\int_{s}^{t}F'(u)du.$$
So by H\"{o}lder's inequality
$$|F(t)|\leq |F(s)|+\lambda^{1/p'}\left(\int_I|F'(t)|^pdt\right)^{1/p}.$$
Integrating both sides with respect to $s$, we get
$$|F(t)|\leq\lambda^{-1}\int_I|F(s)|ds+\lambda^{1/p'}\left(\int_I|F'(t)|^pdt\right)^{1/p}$$
and  using again  Holder's inequality to obtain
$$|F(t)|\leq \lambda^{-1/p}\left(\int_I|F(t)|^pdt\right)^{1/p}+\lambda^{1/p'}\left(\int_I|F'(t)|^pdt\right)^{1/p}.$$
Then (\ref{pp'}) is established.
\end{proof}
 The next lemma is essentially contained in the  theorems 6.2 and 6.1 of  \cite{Xu}.
 \begin{lem}\label{l8}
    Let $ \Phi \in L^1(\mathbb{R}^n w_k(x)dx)$ be a real valued radial function with $ \mathcal{F}_k(\Phi) \in L^1(\mathbb{R}^n, w_k(x)dx)$  and which satisfies
$|\Phi(x)|  \leq C(1 + |x|)^{-2\gamma_k-n-1}$. Then we have
  $$\|\sup_{t>0}\Phi_t*_kf(x)\|_{p,k}\leq C \|f\|_{p,k};\quad  1< p\leq \infty,$$
  where $\Phi_t(x)=t^{-2\gamma_k-n}\Phi(t^{-1}x)$.
 \end{lem}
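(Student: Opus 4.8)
The plan is to dominate the maximal operator by the Dunkl analogue of the Hardy--Littlewood maximal function and then invoke the $L^p$ boundedness of the latter, which is precisely the content of the theorems of \cite{Xu} cited just before the statement. So the work reduces to checking that the stated hypotheses put us in the setting handled there.

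First I would replace $\Phi$ by its least radial decreasing majorant. Writing $\Phi(x)=\widetilde{\Phi}(|x|)$, set $\Psi(x)=\psi(|x|)$ with $\psi(r)=\sup_{s\geq r}|\widetilde{\Phi}(s)|$; this is radial, nonnegative and nonincreasing, and the hypothesis $|\Phi(x)|\leq C(1+|x|)^{-2\gamma_k-n-1}$ forces $\psi(r)\leq C(1+r)^{-2\gamma_k-n-1}$. The crucial preliminary computation is that $\Psi\in L^1(\mathbb{R}^n,w_k(x)dx)$: since $w_k$ is homogeneous of degree $2\gamma_k$, passing to polar coordinates reduces the mass to $\int_0^\infty \psi(r)\,r^{2\gamma_k+n-1}\,dr\leq C\int_0^\infty (1+r)^{-2\gamma_k-n-1}r^{2\gamma_k+n-1}\,dr$, which converges because the integrand is $O(r^{2\gamma_k+n-1})$ near $0$ and $O(r^{-2})$ at infinity. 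Because the normalization $\Phi_t(x)=t^{-2\gamma_k-n}\Phi(t^{-1}x)$ is exactly $L^1(w_k)$-preserving, every $\Psi_t$ carries the same mass $\|\Psi\|_{1,k}$.

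Next I would establish the pointwise domination $\sup_{t>0}|\Phi_t*_kf(x)|\leq C\,\|\Psi\|_{1,k}\,M_kf(x)$, where $M_k$ denotes the Dunkl maximal function formed from averages of $\tau_x(|f|)$ over balls centered at the origin. The explicit integral representation of $\tau_x$ on radial functions recorded in the excerpt shows that $|\tau_x(\Phi_t)|\leq\tau_x(\Psi_t)$ pointwise (the profile $|\widetilde{\Phi_t}|$ is majorized by $\widetilde{\Psi_t}$ and $\mu_x$ is a probability measure), whence $|\Phi_t*_kf(x)|\leq\Psi_t*_k|f|(x)$ after using commutativity of $*_k$. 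I would then apply the layer--cake decomposition of the decreasing profile, $\psi(r)=\int_0^\infty\mathds{1}_{[0,s]}(r)\,d(-\psi)(s)$, to write $\Psi_t$ as a superposition of normalized indicators of balls; each resulting ball average is bounded by $w_k(B(0,ts))\,M_kf(x)$, and $\int_0^\infty w_k(B(0,s))\,d(-\psi)(s)$ is comparable to $\|\Psi\|_{1,k}$, so integrating the pieces yields the claimed bound uniformly in $t$. This is exactly the mechanism behind Theorem~6.2 of \cite{Xu}.

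Finally, the conclusion follows from the $L^p(w_k)$ boundedness of $M_k$ for $1<p\leq\infty$ (Theorem~6.1 of \cite{Xu}): for $1<p<\infty$ one gets $\|\sup_{t>0}|\Phi_t*_kf|\|_{p,k}\leq C\|\Psi\|_{1,k}\|M_kf\|_{p,k}\leq C\|f\|_{p,k}$, while the endpoint $p=\infty$ is immediate from Young's inequality (\ref{Y}), since $\|\Phi_t*_kf\|_{\infty,k}\leq\|\Phi_t\|_{1,k}\|f\|_{\infty,k}=\|\Phi\|_{1,k}\|f\|_{\infty,k}$ uniformly in $t$. I expect the main obstacle to be the pointwise domination step: the Dunkl translation is only guaranteed to be positivity-preserving and to admit the explicit averaging form needed for the layer--cake argument on \emph{radial} functions, which is exactly why passing to the radial majorant $\Psi$ before estimating is essential, and why the hypothesis that $\Phi$ be radial cannot be dropped.
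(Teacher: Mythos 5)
Your proposal is correct and coincides with the paper's treatment: the paper offers no independent proof, stating only that the lemma ``is essentially contained in the theorems 6.2 and 6.1 of \cite{Xu}'', and your argument (least radial decreasing majorant, pointwise domination via the integral representation of $\tau_x$ on radial functions, layer--cake reduction to ball averages, then $L^p$ boundedness of the Dunkl maximal function, with Young's inequality at $p=\infty$) is exactly the mechanism of those two cited theorems. The only cosmetic difference is that you never use the hypothesis $\mathcal{F}_k(\Phi)\in L^1(\mathbb{R}^n,w_k(x)dx)$, which is indeed not needed for the stated conclusion.
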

Now  we consider our  maximal  operator
 $$A_\alpha (f)(x)=\sup_{ t>0}|A_\alpha(f)(x,t)|$$
 where
 $$A_\alpha (f)(x,t)=\int_{ \mathbb{R}^n} \frac{e^{\pm it|\xi|}}{|t\xi|^\alpha}\phi(t\xi)\mathcal{F}(f) (\xi)E_k(ix,\xi)w_k(\xi)d\xi.$$
We  write
$$A_\alpha (f)(x,t) = \sum_{\nu=0}^\infty A_\alpha ^\nu(f)(x,t)$$
where
$$A_\alpha^\nu (f)(x,t)=\int_{ \mathbb{R}^n} \frac{e^{\pm it|\xi|}}{|t\xi|^\alpha}\phi(t\xi)\psi(2^{-\nu}t|\xi|)\mathcal{F}(f) (\xi)E_k(ix,\xi)w_k(\xi)d\xi.$$
Let $t\in[1,2]$. We can use  the  same argument that  provided   (\ref{Tmp})  and duality  to get the following estimates
$$\|A_\alpha^\nu (f)(.,t))\|_{p,k}\leq C 2^{\nu[\alpha_0  |2/p-1|-  \alpha]}\|f\|_{p,k}$$
$$\|\frac{\partial}{\partial t}A_\alpha^\nu (f)(.,t)\|_{p,k}\leq C 2^{\nu[\alpha_0 |2/p-1|- ( \alpha-1)]}\|f\|_{p,k}$$
 for an arbitrary $\alpha_0\in ]\gamma_k+(n-1)/2,\;\gamma_k+(n+1)/2$ and $  p\geq1$. Now by  applying the lemma \ref{l5} with $\ell=2^{-\nu}$ it follows that
 \begin{equation}\label{122}
    \|\sup_{1\leq t\leq2}A_\alpha^\nu (f)(.,t)\|_{p,k} \leq C 2^{\nu[\alpha_0 |2/p-1|-  \alpha+1/p]}\|f\|_{p,k}\;.
 \end{equation}
In addition, (\ref{122}) implies that  for $j\in \mathbb{Z}$,
    $$\|\sup_{2^j\leq t\leq2^{j+1}}A_\alpha^\nu (f)(.,t)\|_{p,k} \leq C 2^{\nu[\alpha_0 |2/p-1|-  \alpha+1/p]}\|f\|_{p,k}$$
which  can be seen by  writing
\begin{equation}\label{122'}
\sup_{2^{j}\leq t\leq2^{j+1}}|A_\alpha^\nu (f)(x,t)|=\sup_{1\leq t\leq2}|A_\alpha^\nu (f(2^{j}.))(2^{-j}x,t)|.
\end{equation}
In the next we claim that for $p\geq 2$,
\begin{equation}\label{123}
\|\sup_{t>0}|A_\alpha^\nu (f)(.,t)\|_{p,k}\leq C \;2^{\nu [\alpha_0 |2/p-1|-  \alpha+1/p]}  \|f\|_{p,k}
\end{equation}
 which asserts that
 \begin{equation}\label{123'}
\|\sup_{t>0}|A_\alpha  (f)(.,t)\|_{p,k}\leq C \;  \|f\|_{p,k},
\end{equation}
for  $p\geq2$ and $\alpha>  (2\gamma_k+(n-1))\left(1/2-1/p\right)+1/p$.
 For the proof we invoke a variant of the Littlewood-Paley  operators associated to  Dunkl transform ,  see Corollary 4.2 of  \cite{Dai}. Let  $t\in[2^j,2^{j+1}]$, $j\in \mathbb{Z}$. Define the function $f_\ell$ by $ \mathcal{F}_k(f_\ell)(\xi)=\mathcal{F}_k(f)(\xi)\psi(2^{-\ell}|\xi|)$,
 $\ell\in \mathbb{Z}$. It follows  that
$$A_\alpha ^\nu(f )(x,t) =A_\alpha ^\nu\left(\sum_{  |\ell+j-\nu| \leq 2}f_\ell\right)(x,t). $$
 From this and   (\ref{122'}) we have  for all $p\geq2$
 \begin{eqnarray*}
 \int_{\mathbb{R}^n}\sup_{t>0}|A_\alpha^\nu (f)(x,t)| ^p w_k(x)dx&\leq&
 \sum_{j\in\mathbb{Z}} \int_{\mathbb{R}^n}\sup_{2^j\leq t\leq2^{j+1}}|A_\alpha^\nu (f)(x,t)|^pw_k(x)dx
     \\ &\leq& C\;2^{\nu p\;[\alpha_0 |2/p-1|-  \alpha+1/p]} \sum_{j\in\mathbb{Z}} \int_{\mathbb{R}^n}\sum_{  |\ell+j-\nu| \leq 2}|f_\ell(x)|^pw_k(x)dx
     \\ &\leq& C\;2^{\nu p\;[\alpha_0 |2/p-1|-  \alpha+1/p]}  \int_{\mathbb{R}^n}\sum_{ \ell\in\mathbb{Z}}|f_\ell(x)|^pw_k(x)dx
                    \\ &\leq &C \;2^{\nu p[\alpha_0 |2/p-1|-  \alpha+1/p]} \int_{\mathbb{R}^n}
                     \left(\sum_{ \ell\in\mathbb{Z}}|f_\ell(x)|^2\right)^{p/2}w(x)dx\\
                     &\leq& C \;2^{\nu p[\alpha_0 |2/p-1|-  \alpha+1/p]}  \|f\|_{p,k}^p.
\end{eqnarray*}
This yields the claim (\ref{123}).
\par The  range $p\leq  2$ follows by interpolation argument, we proceed as in the proof of the theorem \ref{th2}. Let us first
 observe that when $\alpha>\gamma_k+(n+1)/2$ the kernel $K_\alpha$ of the operator $T_{m_\alpha}$ satisfies the following decay estimates
 $$|K_\alpha(x)|\leq \frac{C_N}{(1+|x|)^N}; \quad n\in \mathbb{N},\qquad N\in \mathbb{N}$$
 which is  immediately  by making  use of (\ref{rad}), (\ref{bvB}) and integration by parts.
  Hence using Lemma \ref{l8}  we get
 the boundedness of $A_\alpha$ on $L^p(\mathbb{R}^n,w(x)dx)$  for $1 < p \leq \infty$.
 The key tool is the following.  For  an arbitrary $\alpha_0>\gamma_k+(n+1)/2$, one  can write
  \begin{eqnarray*}
  m_\alpha^\nu(t\xi)&=& \Big\{\psi(2^{-\nu}t|\xi|) |2^{-\nu}t\xi|^{ \alpha_0-\alpha}\Big\} \Big\{ e^{it|\xi|} |t\xi|^{-\alpha_0} \phi(t\xi)\Big\}
  \\&=&2^{\nu(\alpha_0-\alpha)}h(2^{-\nu}t\xi)  m_{\alpha_0}(t\xi)
 \end{eqnarray*}
where $ h(\xi)= \psi( |\xi|) |\xi|^{\alpha_0-\alpha}$. Put $H=\mathcal{F}_k^{-1}(h)$, which is a radial Schwartz function. Thus we can write
$$A_\alpha ^\nu (f)(x,t)=2^{\nu(\alpha_0-\alpha)}H_{2^{\nu}t^{-1}}*_k A_{\alpha_0}  (f)(.,t)(x)$$
and we have that
$$|A_\alpha ^\nu (f)(x,t)|\leq 2^{\nu(\alpha_0-\alpha)}|H_{2^{\nu}t^{-1}}|*_k |A_{\alpha_0}  (f(.,t)|(x)
 \leq 2^{\nu(\alpha_0-\alpha)}|H|_{2^{\nu}t^{-1}}*_k |A_{\alpha_0}  (f)|(x).$$
Then  from Lemma  \ref{l8} and the boundedness of $A_{\alpha_0}$ we get
 $$\|\sup_{t>0}|A_\alpha^\nu  (f)(.,t)\|_{q,k}\leq  C 2^{\nu(\alpha_0-\alpha)}\|f\|_{q,k}; \quad 1<q\leq \infty.$$
On the other hand, the boundedness on $L^2(\mathbb{R},w_k(x)dx)$ in (\ref{123}) gives
  $$\|\sup_{t>0}|A_\alpha^\nu  (f)(.,t)\|_{2,k}\leq  C 2^{\nu(1/2- \alpha)}\|f\|_{2,k}.$$
So, by using the Riesz-Thorin interpolation Theorem,
\begin{eqnarray}\label{Tp}
 \|\sup_{t>0}|A_\alpha^\nu  (f)(.,t)\|_p\leq C \; 2^{\nu (\theta(\alpha_0-\alpha)+(1-\theta)(1/2- \alpha))}=C\;2^{\nu (\theta(\alpha_0-1/2)+ 1/2- \alpha)}
\end{eqnarray}
for all   $1<q\leq p\leq 2$, where $1/p=1/2+\theta(1/q-1/2)$   . It is now easy to check from this that the sum $\sum_\nu\|\sup_{t>0}|A_\alpha^\nu  (f)(.,t)\|_p$ is finite
 when $$\alpha>(\frac{1}{p}-\frac{1}{2})( 2\gamma_k+n-1 )+ \frac{1}{p}.$$
In fact, as $\theta=(1/p-1/2)(1/q-1/2)^{-1}$ we see that
$\theta(\alpha_0-1/2)+ 1/2$ tends to $(1/p-1/2)( 2\gamma_k+n-1 )+ 1/p$ when letting $\alpha_0$ go to $\gamma_k+(n+1)2$ and $q$ go to $1$
 which guarantees the choice of   $\alpha_0$   and $q$
such that $\alpha>\theta(\alpha_0-1/2)+ 1/2$.  This conclude the case $p\leq 2$ and
thus we have completely proved our theorem.
  \subsection{Proof of Theorem \ref{th4}}
 Let $\alpha>0$.  As a first step we write
 \begin{eqnarray*}
    \mathcal{J}_{\alpha+\gamma_k+n/2-1}(t|\xi|)&= &\mathcal{J}_{\alpha+\gamma_k+n/2-1}(t|\xi|)\phi(t \xi )+ \mathcal{J}_{\alpha+\gamma_k+n/2-1}(t|\xi|)(1-\phi(t\xi))\\
    &=& a_\alpha^{(1)}(t\xi)+ a_\alpha^{(2)}(t\xi)
\end{eqnarray*}
and so, $\mathcal{M}_\alpha(f) =\mathcal{M}_\alpha^{(1)} +\mathcal{M}_\alpha^{(1)} $  with
$$\mathcal{M}_\alpha^{(i)}(f) = \sup_{t>0}\left|\int_{\mathbb{R}^n} a_\alpha^{(i)}(t|\xi|) \mathcal{F}_k(f)(\xi)w_k(\xi)d\xi\right|=
\sup_{t>0}|\mathcal{M}_\alpha^{(i)}(f)(x,t)|, \quad i=1,2.$$
 Since the function  $a_\alpha^{(2)}$  is a $C^\infty$ with compact support, in view of Lemma \ref{l8}  we get the $L^p$-boundedness of
 $\mathcal{M}_\alpha^{(2)} $ for all $1<p\leq \infty$. To treat the boundedness of $\mathcal{M}_\alpha^{(1)}$ we invoke the asymptotic form (\ref{bvB}) for the Bessel function $\mathcal{J}_{\alpha+\gamma_k+n/2-1}$. For $N$ big enough,
\begin{eqnarray*}
 \mathcal{J}_{\alpha+\gamma_k+n/2-1}(t|\xi| )&=&e^{it|\xi|}\sum_{\ell=0}^{N-1}c_\ell (t|\xi|)^{-\alpha-\gamma_k-(n-1)/2-\ell  }\\&&+e^{-it|\xi|}\sum_{\ell=0}^{N-1}c'_\ell (t|\xi|)^{-\alpha-\gamma_k-(n-1)/2-\ell  }+R_N(t|\xi|),
 \end{eqnarray*}
where $c_\ell$ are constants. The function $R_N$ satisfies for all $j \in \mathbb{N}$
\begin{eqnarray}\label{R}
 |\partial_s^jR_N(s)|\leq C|s|^{-\alpha-N-\gamma_k-(n-1)/2 }.
\end{eqnarray}
It is not hard to show that this requirement is satisfied.
Set $V =\mathcal{F}_k^{-1}(R_N(|.|)\phi(.))$, which  can be represented  by
\begin{eqnarray*}
 V(x) = \frac{2^{-\gamma_k-n/2+1}}{\Gamma(\gamma_k+n/2)}\int_{0}^\infty R_N(s)\widetilde{\phi}(s)  s ^{ 2\gamma_k+n-1}  \mathcal{J}_{\gamma_k+n/2-1}(|x|s)ds.
\end{eqnarray*}
So, with the use  of asymptotic form (\ref{bvB}), integration by part and (\ref{R}) we get that for some larger $N$,
$$|V(x)|\leq C (1+|x|)^{-N}$$
and because of  Lemma \ref{l8}
 $$\|\sup_{t>0}V_t*_kf \|_{p,k} \leq C_p \|f\|_{p,k}; \quad 1< p\leq \infty.$$
It therefore  only remains to check  the needed conditions for the boundedness of   the multipliers $A_{ \alpha+\gamma_k+(n-1)/2+\ell}$, for  $\ell=0,1...N-1$.
  But this follow from Theorem \ref{th3} and can be reduced to  the $L^p$- boundedness  of   $A_{ \alpha+\gamma_k+(n-1)/2 }$  that is
 $$\alpha+\gamma_k+(n-1)/2>( 2\gamma_k+n-1 )\left|\frac{1}{p}-\frac{1}{2}\right|+ \frac{1}{p}$$
or equivalently
 \begin{itemize}
   \item[(a)] $\alpha  > 1-2\gamma_k-n+(2\gamma_k+n)/p$, $\qquad$if  $ \qquad 1< p \leq 2$
   \item[(b)] $\alpha> (2-2\gamma_k-n)/p$, $\qquad$ if $\qquad 2\leq p <\infty,$
 \end{itemize}
which is the desired statement   of Theorem \ref{th4}.

 \end{document}